\newmdenv[backgroundcolor=yellow]{shaded}
\newcommand{\rar}{\rightarrow}
\newcommand{\lar}{\longrightarrow}
\newcommand{\llar}{-\kern-5pt-\kern-5pt\longrightarrow}
\newcommand{\lllar}{-\kern-5pt-\kern-5pt\llar}
\newtheorem{Theorem}{Theorem}[section]
\newtheorem{Corollary}[Theorem]{Corollary}
\newtheorem{Proposition}[Theorem]{Proposition}
\newtheorem{Conjecture}[Theorem]{Conjecture}
\newtheorem{Remark}[Theorem]{Remark}
\newtheorem{Example}[Theorem]{Example}
\newtheorem{Definition}[Theorem]{Definition}
\newtheorem{Question}[Theorem]{Question}
\newtheorem{Questions}[Theorem]{Questions}
\newtheorem{Sticky Points}[Theorem]{Sticky Points}
\def\codim{\mbox{\rm codim}}
\def\coker{\mbox{\rm coker}}
\def\ds{\displaystyle}
\def\Ext{\mbox{\rm Ext}}
\def\gr{\mbox{\rm gr}}
\def\grade{\mbox{\rm grade}}
\def\height{\mbox{\rm ht}}
\def\Hom{\mbox{\rm Hom}}
\def\ker{\mbox{\rm ker}}
\def\e{\mathrm{e}}
\def\rme{\mathrm{e}}
\def\m{\mathfrak{m}}
\def\ZZ{\mathbb{Z}}
\def\AA{{\mathbf A}}
\def\BB{{\mathbf B}}
\def\CC{{\mathbf C}}
\def\RR{{\mathbf R}}
\def\SS{{\mathbf S}}
\def\LL{{\mathbf L}}
\def\TT{{\mathbf T}}
\def\ff{{\mathbf f}}
\def\pp{{\mathbf p}}
\def\g2{{\mathbf g}}
\def\aa{{\mathbf a}}
\def\p{{\mathfrak p}}
\def\q{{\mathfrak q}}
\def\m{{\mathfrak m}}
\def\M{\mathfrak{M}}
\def\C{\mathcal{C}}
\def\B{\mathcal{B}}
\def\D{\mathcal{D}}
\def\ddeg{\mbox{\rm bideg}}
\def\tdeg{\mbox{\rm tdeg}}
\def\cdeg{\mbox{\rm cdeg}}
\def\codim{\mbox{\rm codim}}
\def\coker{\mbox{\rm coker}}
\def\tr{\mbox{\rm trace}}
\begin{document}

\title{\sc  The Bi-Canonical Degree of a Cohen-Macaulay Ring }

\author{L. Ghezzi} \address{Department of Mathematics, New York City
College of Technology-Cuny, 300 Jay Street, Brooklyn, NY 11201, U.S.A.} \email{lghezzi@citytech.cuny.edu}
\author{S. Goto}
\address{Department of Mathematics, School of Science and Technology,
Meiji University, 1-1-1 Higashi-mita, Tama-ku, Kawasaki 214-8571,
Japan} \email{goto@math.meiji.ac.jp}
\author{J. Hong}
\address{Department of Mathematics, Southern Connecticut State
University, 501 Crescent Street, New Haven, CT 06515-1533, U.S.A.}
\email{hongj2@southernct.edu}
\author{H. L. Hutson} \address{40 U Madison Park Gardens, Port Washington, NY 11050, U.S.A.}
\email{hutsonroy@gmail.com}
	\author{W. V. Vasconcelos}
\address{Department of Mathematics, Rutgers University, 110
Frelinghuysen Rd, Piscataway, NJ 08854-8019, U.S.A.}
\email{wvasconce@gmail.com}

\thanks{The third author is partially supported by the SCSU Faculty Creative Activity Research Grant Program (AY 2018-2019).}

\maketitle

{\em \small Dedicated to Professor Craig Huneke  on the occasion of his birthday  for his groundbreaking contributions to Algebra,  particularly  to Commutative Algebra.}

\begin{abstract}
This paper is a sequel to \cite{blue1} where we introduced an invariant, called canonical degree, of Cohen-Macaulay local rings
 that admit a canonical ideal. Here to each such ring $\RR$ with a canonical ideal, we attach a different invariant, called bi-canonical degree, which in dimension $1$
appears also in \cite{Herzog16} as the residue of $\RR$. The minimal values of these functions characterize specific classes of
Cohen-Macaulay rings.  We give a uniform presentation of such degrees and discuss some computational opportunities offered by the
  bi-canonical degree.
  \end{abstract}

\section{Introduction}

\noindent  Let $(\RR, \m)$ be a Cohen-Macaulay local ring of dimension $d$ that has a canonical ideal $\C$. 
We will use \cite{BH} as our reference for the basic properties of these rings and their terminology.
Our central viewpoint is to look at the properties of $\C$ as a way to refine our understanding of $\RR$.  Recall that $\RR$  is Gorenstein when $\C$ is isomorphic to $\RR$.
In \cite{blue1}  we treated metrics  aimed at measuring the deviation from $\RR$ being Gorenstein.
Here we explore another pathway but still with the same overall goal. Unlike \cite{blue1} the approach here is arguably more suited for   computation in classes of algebras such as  monomial subrings (see Proposition~\ref{preaGor}), Rees algebras (see  Section \ref{Reesalgs}), and numerous classes of rings of dimension one (see Theorem~\ref{TCdeg}).
 First however we outline the general
 underpinning of these developments.
 The organizing principle in  setting up a canonical degree
is to recast numerically   criteria for a Cohen-Macaulay ring  to be Gorenstein. 
Most of our results are in a ring of dimension one but we also treat  higher dimensional rings in some special cases.

\medskip

We shall now describe how this paper is organized. For a Cohen-Macaulay local ring $(\RR, \m)$ of dimension $d$ with a canonical ideal $\C$,
we  attach a non-negative integer $c(\RR)$ whose value reflects divisorial properties of $\C$
and provides for a stratification of the class of Cohen-Macaulay rings.
 We have noted two such functions, the residue of $\RR$ and the canonical degree,
  in the current
literature (\cite{Herzog16}, \cite{blue1})  and here we build a third degree, the bi-canonical degree. 

\medskip

In Section 2 we recall from the literature the needed facts to put together  the degrees. In Section 3, we define the new degree called {\em bi-canonical} degree of $\RR$. It 
is given by
\[ \ddeg(\RR)= \deg(\C^{**}/\C) = \sum_{\tiny \height(\p)=1} \ddeg(\RR_{\p}) \deg(\RR/\p) = \sum_{\tiny \height(\p)=1} [\lambda(\RR_{\p}/\C_{\p}) - \lambda(\RR_{\p}/\C^{**}_{\p})] \deg(\RR/\p), \] where $\C^{**}$ is the bidual of 	$\C$. This is a well-defined finite sum independent of the chosen canonical ideal $\C$.
These constructions are useful for effective symbolic calculation (we used Macaulay2 (\cite{Macaulay2}) in our
experiments) but turn out useful for theoretical calculations in special classes of rings. 
It leads immediately to comparisons to two other degrees, the {\em canonical} degree of \cite{blue1},
$\cdeg(\RR)= \deg(\C/(s))$ for a minimal reduction $(s)$ of $\C$ in dimension one and suitably defined as above to all dimensions,
 and the {\em residue} of $\RR$ of \cite{Herzog16}
$\tdeg(\RR) = \deg(\RR/\mbox{\rm trace}(\C))$, where $\mbox{\rm trace}(\C)$ is the trace ideal of $\C$.
Arising naturally is a comparison conjecture, that is $\cdeg(\RR) \geq \ddeg(\RR)$. We engage in a discussion on how to recognize that a codimension-one ideal $I$ is actually a canonical ideal. We finally recall the notion of the {\em rootset}
of $\RR$ (\cite{blue1}), perhaps one of least understood sets attached to $\C$ and raise questions on how it affects the values of the degrees.

 \medskip

We start in Section 4 a study of algebras according to the values
 of $c(\RR)$, that is of either  $\cdeg(\RR)$ or $\ddeg(\RR)$.  If $c(\RR)=0$, for all canonical degrees, then $\RR$ is Gorenstein in
codimension one. It is natural to ask which rings correspond to the small values of $c(\RR)$. 
In dimension one, $\cdeg(\RR)\geq r(\RR)-1$ and $\ddeg(\RR)\geq 1$,
where equality corresponds to the {\em almost Gorenstein } rings of  (\cite{BF97, GMP11, GTT15})
and
{\em nearly Gorenstein} rings of \cite{Herzog16}.

\medskip

We begin calculations of $\cdeg(\RR)$ and $\ddeg(\RR)$  in Sections 5, 6, 7 and 8 for various classes of algebras. Unlike the case of $\cdeg(\RR)$, already for hyperplane sections the
behavior of $\ddeg(\RR)$ is more challenging. Interestingly, for monomial rings $k[t^a, t^b, t^c]$ the technical difficulties are reversed. In two cases, augmented rings and
[tensor] products, very explicit formulas are derived.  More challenging is the case of Rees algebras when we are often
 limited to deciding the vanishing of degrees. The most comprehensive results resolve around $\m:\m$.

\section{Setting up and calculating canonical degrees}

We make use of the  basic facts expressed in the codimension $1$ localizations of $\RR$.

\begin{Theorem} \label{corecan}
Let $(\RR, \m)$ be a local ring of dimension $1$
and let $Q$ be  its
total ring of fractions. Assume that $\RR$ has a canonical module $\omega_{\RR}$.

\begin{enumerate}[{\rm (1)}]
\item $\RR$ has a canonical ideal if and only if the total ring of fractions of  $\widehat{\RR}$ is Gorenstein. \cite{Aoyama, BrodSharp, HK2}.
\item An $\m$-primary ideal $I$ is a canonical ideal if and only if $I:_{\RR} \m = I:_Q \m = (I, s)$  for some $s \in \RR$. \cite[Theorem 3.3]{HK2}. 
\item $\RR$ is Gorenstein if and only if $\omega_{\RR}$ is a reflexive module. \cite[Corollary 7.29]{HK2}.
\item If $\RR$ is an integral domain with finite integral  closure,  then $I^{**}=\Hom( \Hom(I, \RR), \RR)$ is integral over $I$. \cite[Proposition 2.14]{CHKV}.
\end{enumerate}
\end{Theorem}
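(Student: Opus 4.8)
Since each clause is a statement drawn from the literature, my plan is to reconstruct the standard arguments from the structure theory of canonical modules rather than to invent new proofs; I indicate the mechanism behind each and single out (4) as the one requiring real work.

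For (1), a canonical ideal is exactly a realization of $\omega_{\RR}$ as an ideal of $\RR$, and such a realization exists if and only if $\omega_{\RR}$ has rank one at each associated prime, i.e.\ $\RR$ is generically Gorenstein. I would test this condition after completion, using the compatibility $\widehat{\omega_{\RR}}\cong\omega_{\widehat{\RR}}$ together with faithfully flat descent; since $\widehat{\RR}$ is Cohen--Macaulay, ``generically Gorenstein'' translates into the total ring of fractions of $\widehat{\RR}$ being Gorenstein. For (2), I would identify $(I:_{\RR}\m)/I$ with the socle $\Hom_{\RR}(\RR/\m,\RR/I)$ of the Artinian ring $\RR/I$; the condition $I:_{\RR}\m=(I,s)$ says this socle is one-dimensional, which is precisely the assertion that $\RR/I$ is zero-dimensional Gorenstein, and via local duality this is equivalent to $I$ being canonical. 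The extra equality $I:_{\RR}\m=I:_Q\m$ is what pins the Artinian Gorenstein condition to the canonical module by controlling the behavior of $I$ in the total ring of fractions $Q$. For (3), one direction is immediate, since $\RR$ Gorenstein gives $\omega_{\RR}\cong\RR$, which is reflexive; for the converse I would use the canonical identification $\Hom_{\RR}(\omega_{\RR},\omega_{\RR})\cong\RR$ and dualize into $\RR$, so that reflexivity forces the rank-one module $\omega_{\RR}$ to be invertible, hence free over the local ring $\RR$.

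The substantive item is (4). For a nonzero ideal of the domain $\RR$ one has $\Hom_{\RR}(I,\RR)\cong(\RR:_Q I)$, so the bidual is the divisorial closure $I^{**}\cong\bigl(\RR:_Q(\RR:_Q I)\bigr)$, and the goal is to locate it inside the integral closure $\overline{I}$. My plan is to pass to the normalization $\overline{\RR}$, which is module-finite by hypothesis: over the normal domain $\overline{\RR}$ the divisorial closure of $I\overline{\RR}$ agrees with its integral closure $\overline{I\overline{\RR}}$, both being the intersection of the DVR-localizations at the height-one primes. Having shown $I^{**}\overline{\RR}\subseteq\overline{I\overline{\RR}}$, I would invoke the determinant trick on the faithful finite $\RR$-module $\overline{\RR}$: from a containment $x\cdot\overline{\RR}\subseteq I\overline{\RR}$ one extracts a monic relation $x^{n}+a_{1}x^{n-1}+\cdots+a_{n}=0$ with $a_{i}\in I^{i}$, exhibiting each $x\in I^{**}$ as integral over $I$.

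The main obstacle is exactly the comparison that transfers the bidual computed over $\RR$ into the normalization, namely $I^{**}\subseteq I\overline{\RR}$. This is where the finiteness of $\overline{\RR}$ is indispensable: without a module-finite normalization the divisorial closure over $\RR$ need not be controlled by any integral extension, and one must feed in the conductor $\mathfrak{c}=(\RR:_Q\overline{\RR})$ to bound $I^{**}$ against $I\overline{\RR}$ before the determinant trick can be applied.
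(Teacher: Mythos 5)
First, a point of comparison: the paper offers no proof of this theorem at all --- it is a list of facts quoted from the literature, with the citations to Aoyama, Brodmann--Sharp, Herzog--Kunz and Corso--Huneke--Katz--Vasconcelos standing in for the arguments --- so your sketches cannot be measured against an in-paper proof, only against the cited results. On that footing, (1) and (3) are roughly in the right spirit, though in (3) the phrase ``reflexivity forces the rank-one module $\omega_{\RR}$ to be invertible'' restates the conclusion rather than proving it (reflexive rank-one modules over a one-dimensional Cohen--Macaulay local ring need not be invertible; that is exactly what [HK2, Corollary 7.29] establishes for $\omega_{\RR}$), and in (1) the passage between generic Gorensteinness of $\RR$ and of $\widehat{\RR}$ is not mere faithfully flat descent, since the generic formal fibres intervene --- which is precisely why the statement is phrased in terms of $\widehat{\RR}$.

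The genuine gaps are in (2) and (4). In (2) you assert that $I:_{\RR}\m=(I,s)$, i.e.\ irreducibility of $I$, already gives via local duality that $I$ is canonical; this is false. In $\RR=k[[x,y]]/(xy)$ the ideal $I=(x^{2},y)$ is $\m$-primary and irreducible ($\RR/I\cong k[x]/(x^{2})$ is Gorenstein Artinian), yet $\nu(I)=2$ while $\omega_{\RR}\cong\RR$, so $I$ is not a canonical ideal. The equality $I:_{\RR}\m=I:_{Q}\m$ is the substantive half of the criterion (compare Proposition 3.5 of the paper, whose companion hypothesis $\Hom(I,I)=\RR$ fails for this $I$ because the idempotent $x/(x+y)\in Q$ multiplies $I$ into itself), and your sketch never actually uses it. In (4) you correctly isolate $I^{**}\subseteq I\overline{\RR}$ as the crux, but ``feed in the conductor'' does not close it: from $\mathfrak{c}\,(\overline{\RR}:_{Q}I)\subseteq(\RR:_{Q}I)$ one gets only $v(x)\geq v(I)-v(\mathfrak{c})$ for $x\in I^{**}$ and each valuation $v$ of $\overline{\RR}$, which is strictly weaker than the needed $v(x)\geq v(I)$. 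The standard repair is asymptotic: check that $x(\RR:_{Q}I)\subseteq\RR$ implies $x^{n}(\RR:_{Q}I^{n})\subseteq\RR$, so $(I^{**})^{n}\subseteq(I^{n})^{**}$; apply the conductor bound to $I^{n}$ to get $nv(x)\geq nv(I)-v(\mathfrak{c})$; and let $n\to\infty$. Only after that is $x\overline{\RR}\subseteq I\overline{\RR}$ available for your determinant trick. As written, (4) stops exactly at the step you yourself flag as the main obstacle.
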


We often assume harmlessly that $\RR$ has  an infinite residue field.  For a finitely generated $\RR$-module $M$,
 the notation $\deg(M)=\rme_0(\m, M)$ refers to the multiplicity  defined by the $\m$-adic topology.
For an $\m$-primary ideal $I$, $\rme_0(I)$ is the multiplicity of $\RR$ defined by the $I $-adic topology.
The Cohen-Macaulay type of $\RR$ is denoted by $r(\RR)$.  In general, in writing $\Hom_{\RR}$ we omit the symbol for the
 underlying ring.

 \begin{Remark}{\rm 
 Let $(R, \m)$ be a Cohen-Macaulay local ring with a canonical ideal $\C$.  
 \begin{enumerate}[(1)]

 \item The canonical degree $\cdeg(\RR)$ of $\RR$ was introduced in \cite{blue1}.  In dimension $1$,  we select an  appropriate
 regular element $c$ of $\C$ and define $\cdeg(\RR) = \deg(\C/(c))$.  We require choices that yield the same value.
 In \cite{blue1}, the ideal $(c)$ is picked as a minimal reduction of $\C$ so that  $\cdeg(\RR) = \rme_0(\C) - \deg(\RR/\C)$. 
 \item The bi-canonical degree $\ddeg(\RR)$ of $\RR$ is defined in Theorem~\ref{genddeg1}. 
  Let $\C^{*} = \Hom(\C, \RR)$ be the dual of $\C$. We define $\ddeg(\RR) = \deg(\C^{**}/\C)$, where $\C^{**}$ is the bidual of $\C$.
 \item  The trace degree $\tdeg(\RR)$ of $\RR$ was introduced in \cite{Herzog16} and called the {\em residue} of $\RR$.
  A standard metric is   $\tdeg(\RR) =\deg(\RR/\tr(\C) )$, where  $\tr(\C) = \C\cdot \C^{*} $ is the {\em trace} ideal of $\C$. Recall that
  the trace of an ideal $I$ is 
   $\tr(I)= I\cdot I^{*} =
   \{f(x): f\in I^{*}, x\in I \}$. 
 \item The  numbers $\cdeg( \RR)$, $\ddeg(\RR) $ and $\tdeg(\RR)$ are independent of the choice of $\C$. 
 In dimension $1$,  $\ddeg(\RR)$ equals $\tdeg(\RR)$ (see Proposition~\ref{resisddeg1}).
 They may differ in dimension $d>1$.
\item  In dimension $1$, $\RR$ is Gorenstein if and only if one of $\cdeg(\RR)$, $\ddeg(\RR)$ or $\tdeg(\RR)$ vanishes, in which case all three vanish.
\item  If $\RR$ is not Gorenstein, $\cdeg(\RR) \geq r(\RR) - 1\geq 1 $,  $\tdeg(\RR) \geq 1$, $\ddeg(\RR) \geq 1$.  The cases when the minimal values are reached have the following designations.

 \begin{enumerate}[(i)]
 \item  $\cdeg(\RR) = r(\RR) - 1$ if and only if $\RR$ is  an {\em almost Gorenstein ring}.
  \item $\tdeg(\RR) = 1$ if and only if $\RR$ is a {\em nearly Gorenstein ring}.
  \item  $\ddeg(\RR) = 1$ if and only if  $\RR$ is  a {\em Goto ring}.
 \end{enumerate}
\end{enumerate}
}\end{Remark}

 There are some  relationships among these invariants.

\begin{Proposition}{\rm [J. Herzog, personal communication]}\label{resisddeg1}  Let $\RR$ be a Cohen-Macaulay local ring of dimension $1$ with a canonical ideal $\C$. Then
\[ \ddeg(\RR) =
\lambda(\RR/\tr(\C)) = \tdeg(\RR).\]
\end{Proposition}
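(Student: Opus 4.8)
The plan is to work inside the total ring of fractions $Q$, where by Theorem~\ref{corecan}(1) the existence of a canonical ideal forces $Q$ to be Gorenstein, so $\C$ is a fractional ideal of rank one containing a nonzerodivisor and all the relevant duals become colon ideals. Concretely, I would identify $\C^{*}=\Hom(\C,\RR)$ with $(\RR :_{Q} \C)$, hence $\C^{**}$ with $\RR :_{Q}(\RR :_{Q}\C)$, and write the trace as $\tr(\C)=\C\cdot(\RR:_{Q}\C)$. Both $\C^{**}/\C$ and $\RR/\tr(\C)$ have finite length: at a height-zero prime $\p$ the module $\C$ localizes to the free canonical module of the Artinian Gorenstein ring $Q_{\p}$, so both quotients are supported only at $\m$. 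Their degrees therefore equal their lengths, and the assertion $\ddeg(\RR)=\tdeg(\RR)$ reduces to the single length equality $\lambda(\C^{**}/\C)=\lambda(\RR/\tr(\C))$.

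The crux of the argument, and the step I expect to carry the real content, is the identity
\[
\C^{**} \;=\; \C :_{Q} \tr(\C) \;=\; \Hom(\tr(\C),\C).
\]
This should follow from the colon arithmetic $A:_{Q}(BC)=(A:_{Q}B):_{Q}C$ together with the fundamental property of a canonical ideal that $\Hom(\C,\C)=\RR$ (valid here because a one-dimensional Cohen--Macaulay ring satisfies $(S_{2})$ and, by the Gorensteinness of $Q$, is generically Gorenstein): indeed
\[
\C :_{Q}\bigl(\C\cdot(\RR:_{Q}\C)\bigr)=(\C:_{Q}\C):_{Q}(\RR:_{Q}\C)=\RR:_{Q}(\RR:_{Q}\C)=\C^{**}.
\]
The delicate point to verify is not the formal manipulation but the input $\Hom(\C,\C)=\RR$, and the compatibility of the resulting identification with the \emph{natural} inclusion $\C\hookrightarrow\C^{**}$; I would check the latter by noting that an element $c\in\C$ acts on $\tr(\C)\subseteq\RR$ with image in $\C\cdot\RR=\C$.

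With the identity in hand the rest is standard duality bookkeeping. I would apply the canonical duality functor $\Hom(-,\C)$ to the short exact sequence $0\to\tr(\C)\to\RR\to\RR/\tr(\C)\to 0$. Using that $\C$ is the dualizing module of the one-dimensional Cohen--Macaulay ring $\RR$, one has $\Hom(\RR/\tr(\C),\C)=0$ and $\Ext^{1}(\RR,\C)=0$, while $\Hom(\RR,\C)=\C$ and $\Hom(\tr(\C),\C)=\C^{**}$ by the previous step; the long exact sequence then collapses to
\[
0\to \C \to \C^{**}\to \Ext^{1}(\RR/\tr(\C),\C)\to 0 .
\]
Finally, local duality identifies $\Ext^{1}(\RR/\tr(\C),\C)$ with the Matlis dual of the finite-length module $\RR/\tr(\C)$, so $\lambda\bigl(\Ext^{1}(\RR/\tr(\C),\C)\bigr)=\lambda(\RR/\tr(\C))$. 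Reading lengths off the displayed sequence gives $\lambda(\C^{**}/\C)=\lambda(\RR/\tr(\C))$, which is precisely $\ddeg(\RR)=\lambda(\RR/\tr(\C))=\tdeg(\RR)$.
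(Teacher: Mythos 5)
Your argument is correct and follows essentially the same route as the paper: both hinge on the identity $\Hom(\tr(\C),\C)=\C^{**}$, then dualize $0\to\tr(\C)\to\RR\to\RR/\tr(\C)\to 0$ into $\C$ and invoke local duality on finite-length modules. The only cosmetic difference is that you derive the key identity by colon-ideal arithmetic in $Q$ (using $\C:_Q\C=\RR$), whereas the paper uses the adjunction $\Hom(\C\cdot\C^{*},\C)=\Hom(\C^{*},\Hom(\C,\C))=\C^{**}$.
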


\begin{proof} 
From $\tr(\C) = \C\cdot
 \C^{*}$, we
 have
\[  \Hom(\tr(\C),\C)=  \Hom(\C \cdot \C^{*},\C)= \Hom(\C^{*}, \Hom(\C,\C))  = \C^{**}.\]
 Now dualize the
  exact sequence
\[
0 \rar \tr(\C) \rar  \RR \rar \RR/\tr(\C)\rar 0,\]   into $\C$
to obtain the following:
\[0 \rar  \C \rar \C^{**} \rar  \Ext^1(\RR/\tr(\C),\C)\rar  0,\]
which
 shows that $\C^{**}/\C$ and $\Ext^1(\RR/\tr(\C),\C)$ are isomorphic. Since by local duality $\Ext^1(\cdot,\C)$ is self-dualizing on modules of
 finite length,
$\ddeg(\RR)= \lambda(\RR/\tr(\C)).$ 
\end{proof}

\section{The bi-canonical degree}

\noindent
The approach in \cite{blue1} is dependent on finding minimal reductions, which is a particularly hard task.
We pick here one that seems more amenable to computation.
In the natural embedding
\[ 0 \rar \C \rar \C^{**} \rar \B \rar 0 \]
the module $\B$ remains unchanged when $\C$ is replaced by another canonical module.

 \begin{Theorem}\label{genddeg1} Let $(\RR, \m)$ be a Cohen-Macaulay local ring  of dimension $d \geq 1$  that has a canonical ideal $\C$.
  Then
  \[ \ddeg(\RR)= \deg(\C^{**}/\C) =
     \sum_{\tiny \height \p=1} \ddeg(\RR_{\p}) \deg(\RR/\p) = \sum_{\tiny \height \p=1} [\lambda(\RR_{\p}/\C_{\p}) - \lambda(\RR_{\p}/\C^{**}_{\p})] \deg(\RR/\p)
  \] is a well-defined finite sum independent of the chosen canonical ideal $\C$.
  Furthermore,
  $\ddeg(\RR)\geq 0$ and vanishes if and only if  $\RR$ is Gorenstein in codimension $1$.
\end{Theorem}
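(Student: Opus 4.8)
The plan is to extract everything from the defining exact sequence $0 \rar \C \rar \C^{**} \rar \B \rar 0$ with $\B = \C^{**}/\C$ (the inclusion is injective because $\C$, being a regular ideal, is torsion-free), after first locating the support of $\B$. I would begin by checking that $\B$ vanishes in codimension zero. A canonical ideal contains a nonzerodivisor, so at a minimal prime $\p$ this element is a unit of the Artinian ring $\RR_\p$ and $\C_\p = \RR_\p$; being free, $\C_\p$ is reflexive and $\B_\p = (\C_\p)^{**}/\C_\p = 0$. (Equivalently, by Theorem~\ref{corecan}(1) the total ring of fractions is Gorenstein, so $\RR$ is Gorenstein at each minimal prime.) Hence $\dim \B \le d-1$. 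Since $\B$ is finitely generated and $\supp \B$ contains no minimal prime of $\RR$, every height-one prime in $\supp\B$ is minimal there, so only finitely many height-one primes $\p$ satisfy $\B_\p \neq 0$; this already yields finiteness of the sum.

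Next I would run the associativity formula for multiplicities on $\B$. Because $\RR$ is Cohen-Macaulay it is equidimensional and catenary, so $\dim \RR/\p = d-1$ exactly when $\height \p = 1$, and the formula reads
\[ \deg(\B) = \sum_{\height \p = 1} \lambda_{\RR_\p}(\B_\p)\, \deg(\RR/\p). \]
Localization is exact and commutes with $\Hom$ over a Noetherian ring, so $\B_\p = (\C_\p)^{**}/\C_\p$, where $\RR_\p$ is one-dimensional Cohen-Macaulay with canonical ideal $\C_\p$. Running the codimension-zero argument inside $\RR_\p$ shows $\B_\p$ is supported only at the maximal ideal, hence of finite length, so $\lambda_{\RR_\p}(\B_\p) = \ddeg(\RR_\p)$; this is the middle expression. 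For the last expression I would pass to fractional ideals in the total quotient ring $Q$ of $\RR_\p$: from $\C^{*}_\p = (\RR_\p :_Q \C_\p) \supseteq \RR_\p$ one gets $(\C_\p)^{**} = (\RR_\p :_Q \C^{*}_\p) \subseteq \RR_\p$, so $\C_\p \subseteq (\C_\p)^{**} \subseteq \RR_\p$ with finite-length quotients, and additivity of length on the local sequence gives $\lambda(\B_\p) = \lambda(\RR_\p/\C_\p) - \lambda(\RR_\p/(\C_\p)^{**})$.

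For independence of the chosen canonical ideal I would use that any two canonical ideals $\C, \C'$ are isomorphic as $\RR$-modules, and that an isomorphism of regular fractional ideals is multiplication by a unit $\alpha \in Q$; then $\C' = \alpha\C$ and $(\C')^{**} = \alpha\,\C^{**}$, so multiplication by $\alpha$ is an $\RR$-isomorphism $\B \rar \B'$, whence $\deg(\B)$ is intrinsic. Nonnegativity is then immediate, since each term $\lambda(\B_\p)\deg(\RR/\p)$ is nonnegative. Finally, $\ddeg(\RR) = 0$ forces $\B_\p = 0$, i.e. $\C_\p$ reflexive, for every height-one $\p$; applying Theorem~\ref{corecan}(3) to the one-dimensional ring $\RR_\p$ identifies this with $\RR_\p$ being Gorenstein, so the vanishing is precisely the condition that $\RR$ be Gorenstein in codimension one.

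I expect the delicate point to be the bookkeeping around $\deg$ in the associativity step: one must ensure that $\deg(\C^{**}/\C)$ registers only the codimension-one part of $\B$, so that when $\RR$ is already Gorenstein in codimension one (and $\B$ may be nonzero but of dimension $< d-1$) the invariant correctly reads $0$, and that the localized bidual $(\C_\p)^{**}$ is genuinely identified with $(\C^{**})_\p$ sitting inside $\RR_\p$. The remaining ingredients---the associativity formula, additivity of length, and the structure statements of Theorem~\ref{corecan}---then combine routinely.
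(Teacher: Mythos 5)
Your outline is sound in most of its steps, but the one point you flag as ``delicate'' and then defer --- that $\deg(\C^{**}/\C)$ registers only the codimension-one part of $\B=\C^{**}/\C$ --- is in fact the entire content of the paper's proof for $d>1$, and your argument as written does not close it. Checking that $\B_\p=0$ at the minimal primes only gives $\dim\B\le d-1$; it does not rule out $\B$ being nonzero with every associated prime of height $\ge 2$ (so $\B_\p=0$ for \emph{all} height-one $\p$ as well). In that situation the associativity formula does not take the form you wrote: $\deg(\B)$ would be computed over the primes of maximal dimension in $\supp\B$, all of height $\ge 2$, and would be positive, while your sum over height-one primes would be $0$; both the first equality of the theorem and the ``vanishes iff Gorenstein in codimension one'' characterization of $\deg(\C^{**}/\C)$ would then fail. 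So this is not routine bookkeeping but the step that needs an idea.

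The paper supplies that idea in one line: since $\C$ is a canonical ideal, $\RR/\C$ is Cohen--Macaulay of dimension $d-1$, and $\B$ embeds in $\RR/\C$ (note $\C^{**}\subseteq\RR$, e.g.\ by Proposition~\ref{bidual}(2)); hence $\Ass(\B)\subseteq\Ass(\RR/\C)$ consists only of height-one primes, so $\B$ is either zero or unmixed of dimension $d-1$, and your associativity step becomes legitimate. (A patch in the spirit of your own argument: $\C^{**}$ is a bidual, hence a second syzygy, hence satisfies $S_2$; a depth chase on $0\rar\C\rar\C^{**}\rar\B\rar 0$ localized at any prime $\q$ of height $\ge 2$ gives $\depth\B_\q\ge 1$, so no such $\q$ is associated to $\B$.) The remaining ingredients of your proposal --- vanishing at minimal primes, finiteness of the sum, localization commuting with duals, additivity of length, independence of $\C$ via multiplication by a unit of the total quotient ring, and the identification of reflexivity with Gorensteinness through Theorem~\ref{corecan}(3) --- are correct and in fact spell out details the paper leaves implicit.
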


\begin{proof}
If $d=1$, then $\B$ vanishes if and only if $\RR$ is Gorenstein (see Theorem \ref{corecan}-(3)). 
Suppose $d >1$. 
Then  $\B$ embeds into the Cohen-Macaulay module $\RR/\C$ that  has dimension $d-1$, and thus $\B$ either is zero or its associated primes are associated primes of $\C$, all of which have codimension $1$.
\end{proof}

 As in \cite{blue1}, we  explore the length of $\B$, $\ddeg(\RR) =\lambda(\B)$ which we view as a degree
 in dimension $1$, and $\deg(\B)$ in general. We focus on  $1$-dimensional case. 
In this case, if $(c)$ is a minimal reduction of $\C$, then
\[ \cdeg(\RR) = \lambda (\RR/(c)) - \lambda(\RR/\C), \; \mbox{and} \;  \ddeg(\RR) = \lambda(\RR/\C) -\lambda(\RR/\C^{**}).\]

\begin{Conjecture}{\rm (Comparison Conjecture) \label{cdegvsfddeg}
In general,   $\cdeg(\RR) \geq \ddeg(\RR)$. 
}\end{Conjecture}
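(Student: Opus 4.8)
The plan is to reduce the conjecture to the one-dimensional case and there to a single length inequality. By Theorem~\ref{genddeg1} we have $\ddeg(\RR)=\sum_{\height\p=1}\ddeg(\RR_{\p})\deg(\RR/\p)$, and $\cdeg$ is defined through the same codimension-one localization recipe, so $\cdeg(\RR)=\sum_{\height\p=1}\cdeg(\RR_{\p})\deg(\RR/\p)$. All the weights $\deg(\RR/\p)$ are non-negative, so it suffices to prove $\cdeg(\RR_{\p})\ge\ddeg(\RR_{\p})$ for each height-one prime $\p$. Since each $\RR_{\p}$ is a one-dimensional Cohen--Macaulay local ring with canonical ideal $\C_{\p}$, I would henceforth assume $\dim\RR=1$.

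In dimension one, fix a minimal reduction $(c)$ of $\C$, so that $\cdeg(\RR)=\lambda(\C/(c))=\rme_0(\C)-\lambda(\RR/\C)$, while Proposition~\ref{resisddeg1} gives $\ddeg(\RR)=\lambda(\RR/\tr(\C))$. The idea is to squeeze $\ddeg(\RR)$ between the trace ideal and the colon ideal $(c):_{\RR}\C$. Writing $\C^{*}=\RR:_{Q}\C$, one checks that $(c):\C=c\,\C^{*}$ and that multiplication by $c$ identifies $((c):\C)/(c)$ with $\C^{*}/\RR$, whence $\lambda(\RR/((c):\C))=\rme_0(\C)-\lambda(\C^{*}/\RR)$. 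Because $c\in\C$ we have $(c):\C=c\C^{*}\subseteq\C\C^{*}=\tr(\C)$, so the containment gives an exact accounting of lengths, and combining these equalities yields the clean identity
\[ \cdeg(\RR)-\ddeg(\RR)=\bigl[\lambda(\C^{*}/\RR)-\lambda(\RR/\C)\bigr]+\lambda\bigl(\tr(\C)/((c):\C)\bigr). \]
The second summand is manifestly non-negative, so it suffices to control the first bracket.

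I would then rewrite $\lambda(\C^{*}/\RR)$ using the self-duality of the canonical ideal. Dualizing $0\to\C\to\RR\to\RR/\C\to0$ into $\C$ and invoking $\Hom(\C,\C)=\RR$, $\Ext^1(\C,\C)=0$ produces $\C^{*}/\RR\cong\Hom(\C,\RR/\C)$; applying Matlis duality together with $\Ext^1(\RR/\C,\C)\cong\RR/\C$ converts this cokernel into $\C\otimes_{\RR}(\RR/\C)$. Hence $\lambda(\C^{*}/\RR)=\lambda(\C/\C^{2})$, and the conjecture in dimension one is reduced to the single inequality
\[ \lambda(\C/\C^{2})\ \ge\ \lambda(\RR/\C),\qquad\text{equivalently}\qquad\lambda(\RR/\C^{2})\ \ge\ 2\,\lambda(\RR/\C). \]

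The hard part will be establishing this last inequality. I would attempt it homologically, exploiting the self-dual structure of $\C$: the regular element $c$ gives a short exact sequence $0\to\RR\to\C\to\C/(c)\to0$ (realizing $\C\cong\omega_{\RR}$), and tensoring with $\RR/\C$ expresses the difference $\lambda(\C/\C^{2})-\lambda(\RR/\C)$ in terms of a comparison between $(\C/(c))\otimes(\RR/\C)$ and the correction term $\Tor_1(\C/(c),\RR/\C)$; pinning down this $\Tor$ is the crux, since a lower bound on the second power $\C^{2}$ of a non-principal ideal is exactly the kind of statement that resists a uniform argument. The inequality holds with equality on the Goto/nearly Gorenstein borderline and is strict in the computable monomial examples, which is encouraging, but it is only a \emph{sufficient} condition: because the containment $(c):\C\subseteq\tr(\C)$ may be strict, any failure of $\lambda(\C/\C^{2})\ge\lambda(\RR/\C)$ would have to be offset by the slack $\lambda(\tr(\C)/((c):\C))$ appearing in the identity above, and it is precisely this coupling of the two terms that I expect to be the genuine obstacle to a proof in full generality.
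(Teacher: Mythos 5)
The statement you are trying to prove is labelled a \emph{Conjecture} in the paper, and the paper offers no proof of it: the authors only supply supporting evidence in special classes (monomial curves, where $\ddeg(\RR)=a_1b_2c_1$ while $\cdeg(\RR)\in\{a_1b_1c_1,a_2b_2c_2\}$ with $b_2\le b_1$, $c_1\le c_2$; products; etc.). So there is no ``paper proof'' to match, and your write-up, by your own admission, is not a proof either. That said, your bookkeeping up to the displayed identity is correct and worth recording: the localization reduction to dimension one is legitimate given how both degrees are defined; $(c):_{\RR}\C=c\,\C^{*}$ is exactly Proposition~\ref{bidual}(1); the identification $\lambda(\C^{*}/\RR)=\lambda(\C/\C^{2})$ via $\C^{*}/\RR\cong\Hom(\C,\RR/\C)$ (using $\Ext^1(\C,\C)=0$ for the maximal Cohen--Macaulay module $\C\cong\omega_{\RR}$) and Matlis duality checks out; and the identity
\[ \cdeg(\RR)-\ddeg(\RR)=\bigl[\lambda(\C/\C^{2})-\lambda(\RR/\C)\bigr]+\lambda\bigl(\tr(\C)/((c):\C)\bigr) \]
is verified, for instance, on $k[[t^3,t^4,t^5]]$ (where it reads $0=0+0$) and on $\mathbb{Q}[t^5,t^7,t^9]$ (where it reads $1=0+1$).

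The genuine gap is the inequality $\lambda(\C/\C^{2})\ge\lambda(\RR/\C)$, which you do not prove and which carries the entire content of the problem after the reduction. Note that since $\lambda(\C/c\C)=\rme_0(\C)$ and $c\C\subseteq\C^{2}\subseteq\C$, this inequality is \emph{equivalent} to $\lambda(\C^{2}/c\C)\le\cdeg(\RR)$, i.e.\ to bounding the failure of $\C$ to have reduction number one by the canonical degree; the reduction number of $\C$ is precisely one of the invariants the paper flags as poorly understood, so the reformulation is not obviously more tractable than the conjecture itself. Two further cautions. First, your reduction is only sufficient: in the $\mathbb{Q}[t^5,t^7,t^9]$ example the bracket $\lambda(\C/\C^{2})-\lambda(\RR/\C)$ is $0$ and all of the positivity of $\cdeg-\ddeg$ comes from the trace-slack term $\lambda(\tr(\C)/((c):\C))$, so any strategy that discards that term from the outset may be doomed even if the conjecture is true. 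Second, the proposed $\Tor$ analysis only reorganizes the unknown quantity: tensoring $0\to(c)\to\C\to\C/(c)\to0$ with $\RR/\C$ trades $\lambda(\C/\C^{2})-\lambda(\RR/\C)$ for $\lambda\bigl((\C/(c))\otimes\RR/\C\bigr)$ minus the image of $\Tor_1(\C/(c),\RR/\C)$, and no tool is offered to control that $\Tor$. As it stands the proposal is an honest partial reduction, not a proof.
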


The point to be raised is which of  $\e_0(\C)$ or $\lambda(\RR/\C^{**})$  is more approachable.   We argue that the latter is more efficient according to the method
of computation below.

\subsubsection*{Computation of duals  and biduals} Throughout this section, let $Q$ be the total ring of fractions of $\RR$. If $I$ is a regular ideal of the Noetherian ring $\RR$, then ${\ds  \Hom(I, \RR) = ( \RR:_Q I )}$.  A difficulty is that Computer Systems such as Macaulay2  (\cite{Macaulay2}) are set to calculate quotients of the form $(A:_\RR B)$ for two ideals $A,B\subset \RR$, which is done with calculations of syzygies.  But it is possible to compute quotients in the total rings of fractions as follows.

\begin{Proposition} \label{bidual}
Let $I$ be an $\RR$-ideal and suppose  $a$ is a regular element of $I$.

 \begin{enumerate}[{\rm (1)}]
  \item $\Hom(I,J)=a^{-1}(aJ:_{R} I)$. In particular,  $I^{*} = \Hom(I,\RR) = a^{-1} ((a) :_\RR I)$.
 \item  $I^{**} = \Hom(\Hom(I, \RR), \RR) =( (a):_\RR ((a):_\RR I))$, which is the annihilator of $\Ext^1_{\RR}(\RR/I, \RR)$.
   \item $\tr(I) = I \cdot I^{*} = a^{-1} I  \cdot ((a):_\RR I)$.
\end{enumerate}
     \end{Proposition}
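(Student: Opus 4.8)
The plan is to prove part (1) first, since (2) follows by applying (1) twice and (3) is then immediate. The entire argument rests on identifying a homomorphism out of a regular ideal with multiplication by an element of $Q$. For part (1) I would begin with the standard fact that for a regular ideal $I$ one has $\Hom(I,J) \cong (J :_Q I) = \{q \in Q : qI \subseteq J\}$, realized as a submodule of $Q$. The key point is that, given $\varphi \in \Hom(I,J)$ and the regular element $a \in I$, for every $x \in I$ we have $a\varphi(x) = \varphi(ax) = x\varphi(a)$, so $\varphi$ is multiplication by $q := \varphi(a)/a \in Q$; since $a$ is regular this $q$ is independent of the chosen regular element, and conversely every $q$ with $qI \subseteq J$ defines such a homomorphism.

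To rewrite $(J :_Q I)$ as an honest $\RR$-colon, I would observe that if $q \in Q$ satisfies $qI \subseteq J$, then because $a \in I$ and $J \subseteq \RR$ we get $b := qa \in \RR$, so $q = a^{-1}b$; moreover $qI \subseteq J \iff bI \subseteq aJ \iff b \in (aJ :_\RR I)$. This yields $\Hom(I,J) = a^{-1}(aJ :_\RR I)$, and setting $J = \RR$ gives $I^{*} = a^{-1}((a):_\RR I)$.

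For part (2), set $K = (a):_\RR I$, so that $I^{*} = a^{-1}K$ with $a \in K$. Applying the description above, $q \in I^{**} \iff q a^{-1} K \subseteq \RR \iff qK \subseteq (a)$; and since $a \in K$, the relation $qa \in (a)$ together with regularity of $a$ forces $q \in \RR$. Hence $I^{**} = ((a):_\RR K) = ((a):_\RR((a):_\RR I))$, which is a genuine $\RR$-ideal. For the $\Ext$ identification I would dualize $0 \rar I \rar \RR \rar \RR/I \rar 0$ into $\RR$: regularity of $I$ gives $\Hom(\RR/I,\RR)=0$, producing $0 \rar \RR \rar I^{*} \rar \Ext^1(\RR/I,\RR) \rar 0$, where $\RR \hookrightarrow I^{*}$ is the natural inclusion inside $Q$. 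Thus $\Ext^1(\RR/I,\RR) \cong I^{*}/\RR$, and its annihilator is $\{r \in \RR : rI^{*} \subseteq \RR\} = I^{**} \cap \RR = I^{**}$, the last equality holding precisely because part (2) already places $I^{**}$ inside $\RR$. Part (3) is then immediate: by definition $\tr(I) = I \cdot I^{*}$, and substituting the formula for $I^{*}$ from (1) gives $\tr(I) = a^{-1} I \cdot ((a):_\RR I)$.

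The main obstacle is the bookkeeping between $(\,\cdot :_Q \cdot\,)$ and $(\,\cdot :_\RR \cdot\,)$, in particular verifying that the bidual actually lands inside $\RR$ rather than merely inside $Q$; this is what makes the colon operations expressible over $\RR$ as the computer algebra systems require, and it is also what lets the $\Ext$-annihilator be read off over $\RR$. Everything else in the proof is formal once the multiplier description $\Hom(I,J)=(J:_Q I)$ is in hand.
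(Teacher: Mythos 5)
Your proof is correct and follows essentially the same route as the paper: identify $\Hom(I,J)$ with $(J:_Q I)$ via the multiplier $\varphi(a)/a$, clear denominators by the regular element $a$ to land in $\RR$-colons, and iterate for the bidual. You supply slightly more detail than the paper (which takes $\Hom(I,\RR)=(\RR:_Q I)$ as a stated fact and cites references for the $\Ext^1$ identification), but the argument is the same.
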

\begin{proof}  (1) It follows from
\[ q \in \Hom(I, J) = (J :_Q I) \; \Longleftrightarrow \; 
 qI \subset J   \; \Longleftrightarrow  \;    aqI \subset aJ  \; \Longleftrightarrow   \;  q \in a^{-1} (aJ:_\RR I). 
\]
(2) It follows from 
\[ I^{**} = [a^{-1} ((a):_\RR I)]^{*}= a ( \RR:_Q ((a):_\RR I)) = a ( a^{-1} (
(a):_\RR ((a):_\RR I) )= (a):_\RR ((a):_\RR I). \]
See also \cite[Remark 3.3]{Vas91}, \cite[Proposition 3.1]{Herzog16}.
\end{proof}

\subsubsection*{Recognition of canonical ideals} An interesting question is to determine whether a given ideal is a canonical ideal. 
 The following  observations are influenced by the discussion in \cite[Section 2]{Elias}. For other methods, see \cite{GMP11}.

\begin{Proposition}  \label{recog}
Let $(\RR,\m)$ be a Cohen-Macaulay local ring of dimension $1$. Then an
$\m$-primary ideal $I$ is a canonical ideal if   $I$ is  an irreducible ideal  and $\Hom(I,I) = \RR$.
 \end{Proposition}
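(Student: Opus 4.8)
The plan is to verify the criterion recorded in Theorem~\ref{corecan}-(2): an $\m$-primary ideal $I$ is a canonical ideal precisely when $I :_{\RR} \m = I :_Q \m = (I,s)$ for some $s \in \RR$. So from the two hypotheses I must extract two separate facts: that $I :_{\RR} \m$ is obtained from $I$ by adjoining a single element, and that passing to the total ring of fractions $Q$ creates nothing new, i.e.\ $I :_Q \m \subseteq \RR$. The irreducibility hypothesis will give the first, and the condition $\Hom(I,I) = \RR$ will give the second.

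First I would read off the socle condition from irreducibility. Since $I$ is $\m$-primary, $\RR/I$ is Artinian local, and $I$ is irreducible in $\RR$ exactly when the zero ideal is irreducible in $\RR/I$; for an Artinian local ring this is equivalent to the socle being one-dimensional over $\RR/\m$. As the socle of $\RR/I$ is $(I:_{\RR}\m)/I$, irreducibility yields $\dim_{\RR/\m}(I:_{\RR}\m)/I = 1$, that is, $I:_{\RR}\m = (I,s)$ for a single $s \in \RR$.

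Next I would control $I:_Q\m$ using $\Hom(I,I) = \RR$. Because $I$ is $\m$-primary in a one-dimensional Cohen-Macaulay ring it contains a regular element, so $\Hom(I,I) = (I:_Q I)$ as in Proposition~\ref{bidual}. The key observation is that $I \subseteq \m$, being a proper ideal of a local ring. Hence if $q \in I:_Q\m$, so that $q\m \subseteq I$, then $qI \subseteq q\m \subseteq I$, which says $q \in I:_Q I = \Hom(I,I) = \RR$. Thus every element of $I:_Q\m$ lies in $\RR$ and satisfies $q\m\subseteq I$, so $I:_Q\m \subseteq I:_{\RR}\m$; the reverse inclusion is automatic, and therefore $I:_Q\m = I:_{\RR}\m = (I,s)$.

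Combining the two inputs gives $I:_{\RR}\m = I:_Q\m = (I,s)$, and Theorem~\ref{corecan}-(2) then identifies $I$ as a canonical ideal. I expect the only genuinely delicate point to be the bootstrap in the third paragraph: noticing that the containment $I \subseteq \m$ upgrades the $\m$-colon $q\m\subseteq I$ into the $I$-colon $qI\subseteq I$, which is exactly what lets the endomorphism hypothesis $\Hom(I,I)=\RR$ do its work. The socle-theoretic reading of irreducibility in the second paragraph is standard and should require no more than the Artinian-local characterization.
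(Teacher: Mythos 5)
Your argument is correct and is essentially identical to the paper's proof: irreducibility gives $I:_{\RR}\m=(I,s)$ via the one-dimensional socle, and the containment $qI\subseteq q\m\subseteq I$ together with $\Hom(I,I)=\RR$ forces $I:_Q\m=I:_{\RR}\m$, after which Theorem~\ref{corecan}-(2) applies. The paper's proof uses exactly the same bootstrap $qI\subseteq q\m\subseteq I$, just stated more tersely.
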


\begin{proof}  Since $I$ is irreducible $(I:_{\RR} \m) = (I,s)$ for some $s \in \RR$. 
If $q \in I:_Q \m$, then $q I \subset q \m \subset I$. The means that  $q\in \Hom(I, I)= \RR$. Therefore, $(I:_{Q} \m)=(I:_{\RR} \m)$. By Theorem~\ref{corecan}-(2), the ideal $I$ is a canonical ideal. 
  \end{proof}

 \begin{Proposition} \label{recogcor} Let $(\RR, \m)$ be a Cohen-Macaulay local ring of dimension $1$.
Then $\m$ is a canonical ideal if and  only if $\RR$ is a discrete valuation ring {\rm(DVR)}.
\end{Proposition}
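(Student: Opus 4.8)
The plan is to characterize canonicity of $\m$ by a single equation on its ring of endomorphisms, and then to show that this equation forces $\m$ to be invertible. First I would apply Theorem~\ref{corecan}-(2) to the $\m$-primary ideal $I=\m$. Here $\m :_{\RR} \m = \RR$ trivially, and $\RR = (\m, 1)$, so the criterion collapses: $\m$ is a canonical ideal if and only if $\m :_Q \m = \RR$. In other words, $\m$ is canonical exactly when $\Hom(\m,\m) = (\m :_Q \m)$ is as small as possible, namely $\RR$ itself.

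Next I would analyze the dual $\m^{-1} := (\RR :_Q \m) = \Hom(\m,\RR)$. Dualizing $0 \rar \m \rar \RR \rar k \rar 0$ into $\RR$ and using $\depth \RR \geq 1$ (so $\Hom(k,\RR)=0$) produces a short exact sequence $0 \rar \RR \rar \m^{-1} \rar \Ext^1(k,\RR) \rar 0$. Since $\dim \RR = 1$, the module $\Ext^1(k,\RR)$ has length equal to the Cohen-Macaulay type $r(\RR) \geq 1$; hence $\m^{-1} \supsetneq \RR$. This strict inclusion is the engine of the argument.

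With both facts in hand I would run the standard invertibility dichotomy. The product $\m\cdot\m^{-1}$ is an ideal of $\RR$ with $\m \subseteq \m\m^{-1} \subseteq \RR$, so by maximality of $\m$ it equals either $\m$ or $\RR$. If $\m\m^{-1} = \m$, then every $q \in \m^{-1}$ satisfies $q\m \subseteq \m$, i.e. $q \in (\m :_Q \m) = \RR$, forcing $\m^{-1} \subseteq \RR$ and contradicting the strict inclusion above. Therefore $\m\m^{-1} = \RR$, so $\m$ is an invertible ideal, hence principal, say $\m = (t)$ with $t$ a regular element; a Noetherian local ring whose maximal ideal is generated by a single nonzerodivisor is a DVR. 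For the converse, if $\RR$ is a DVR then $\m = (t) \cong \RR \cong \omega_{\RR}$ (as $\RR$ is Gorenstein), so $\m$ is a canonical ideal; equivalently $\m :_Q \m = \RR$ and Theorem~\ref{corecan}-(2) applies directly.

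I expect the main obstacle to be the bookkeeping in the middle step rather than any deep idea: one must verify $\Hom(k,\RR)=0$ and identify $\lambda(\Ext^1(k,\RR)) = r(\RR)$ to guarantee $\m^{-1} \supsetneq \RR$, and one must read Theorem~\ref{corecan}-(2) correctly in the degenerate case $I=\m$, where $\m :_{\RR} \m = \RR$ and the auxiliary element $s$ may be taken to be a unit.
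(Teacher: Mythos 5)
Your proof is correct, and its skeleton --- reduce canonicity of $\m$ to the single condition $\m :_Q \m = \RR$, then show that this condition forces $\m$ to be invertible, hence principal --- is the same as the paper's. The difference lies in how invertibility is extracted. You prove $\m^{-1} = (\RR :_Q \m) \supsetneq \RR$ by dualizing $0 \rar \m \rar \RR \rar k \rar 0$ into $\RR$ and identifying $\m^{-1}/\RR \simeq \Ext^1(k,\RR)$, whose length is the type $r(\RR) \geq 1$; the dichotomy $\m\m^{-1} \in \{\m, \RR\}$ then finishes. The paper instead fixes a regular element $x \in \m$, takes the least $n$ with $\m^n \subset x\RR$, and observes that $\m^n x^{-1}$ is an ideal of $\RR$ not contained in $\m$ (otherwise $\m^{n-1}x^{-1} \subset (\m :_Q \m) = \RR$, contradicting the minimality of $n$), so that $\m \cdot (\m^{n-1}x^{-1}) = \RR$. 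Both routes are elementary and both hinge on $\Hom(\m,\m) = \RR$; yours makes the positivity of the type do the work, while the paper's avoids any homological input. Two minor points of divergence: the paper obtains $\Hom(\m,\m) = \RR$ directly from the fact that a canonical ideal has trivial endomorphism ring, rather than from Theorem~\ref{corecan}-(2) --- though your reading of that criterion in the degenerate case $I = \m$, where $\m :_{\RR} \m = \RR = (\m, s)$ with $s$ a unit, is accurate --- and for the converse the paper cites Proposition~\ref{recog} rather than the isomorphism $\m \simeq \RR \simeq \omega_{\RR}$, but both are immediate.
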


\begin{proof}
\noindent Suppose that $\m$ is a canonical ideal. Let $x \in \m$ be a regular element. Then there exists a positive integer $n$ such that $\m^n \subset x \RR$ but $\m^{n-1} \not \subset x\RR$.  If $n = 1$,  then $\m = x\RR$ and there is nothing else to prove. Suppose that $n \geq 2$. Then $\m^{n} x^{-1}$ is an $\RR$--ideal. If $\m^{n} x^{-1} \subset \m$, then $\m^{n-1} x^{-1} \subset ( \m :_{Q} \m) = \Hom(\m, \m) = \RR$. This means that $\m^{n-1} \subset x \RR$, which is a contradiction. Thus, we have $\m (\m^{n-1}x^{-1}) = \m^{n} x^{-1} = \RR$. Hence $\m$ is invertible, or $\RR$ is a discrete valuation ring. The converse follows from Proposition~\ref{recog}.
\end{proof}

\subsubsection*{The roots of $\RR$} There are several properties of $\C$, such as its {\em reduction number}, that
impact the canonical degree of the ring (see \cite{blue1}). Another property is the
 {\em rootset} of $\RR$ consisting  of the ideals $L$ such that $L^n \simeq \C$ for some integer $n$.
They are distributed into finitely many isomorphism classes. The rootset was studied in \cite{blue1} for  its role in examining properties
of canonical ideals. Here
is one instance.

\begin{Proposition}
Let $\RR$ be a Cohen-Macaulay local ring of dimension $1$ with a canonical ideal $\C$. Let $L$ be an ideal such that $L^n\simeq \C$ for some integer $n$. 
If $L$ is an irreducible ideal,  then $\RR$ is a Gorenstein ring.
\end{Proposition}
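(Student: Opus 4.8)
The plan is to show that the hypotheses force $L$ itself to be a canonical ideal, and then to exploit the resulting isomorphism $L^{n}\simeq L$ to prove that $L$ is principal. Since a regular principal ideal is isomorphic to $\RR$, this yields $\C\simeq\RR$, i.e. $\RR$ is Gorenstein. Because $L$ is assumed irreducible, Proposition~\ref{recog} guarantees that $L$ is a canonical ideal as soon as we know $\Hom(L,L)=\RR$, so the first task is to establish this equality. Throughout I would take $n\ge 2$; the case $n=1$ merely records that $L\simeq\C$.

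To obtain $\Hom(L,L)=\RR$ I would compare the endomorphism rings of $L$ and of $L^{n}$ inside $Q$. Writing $\Hom(L,L)=(L:_Q L)$ and likewise for $L^{n}$, any $q$ with $qL\subseteq L$ satisfies $qL^{n}=(qL)L^{n-1}\subseteq L^{n}$, so $\RR\subseteq (L:_Q L)\subseteq (L^{n}:_Q L^{n})$. Now $L^{n}\simeq\C$, and the canonical module satisfies the standard identity $\Hom(\C,\C)=\RR$ (already used in Proposition~\ref{resisddeg1}). Conjugating by a fixed isomorphism $L^{n}\xrightarrow{\sim}\C$ gives an $\RR$-algebra isomorphism $\Hom(L^{n},L^{n})\cong\Hom(\C,\C)=\RR$ carrying the structure map to the structure map; since a regular element of $L^{n}$ detects the scalar, this forces $(L^{n}:_Q L^{n})=\RR$. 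The displayed inclusions then collapse to $\Hom(L,L)=\RR$, and Proposition~\ref{recog} shows $L$ is a canonical ideal.

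Since the canonical module is unique up to isomorphism, $L\simeq\C\simeq L^{n}$. Composing an isomorphism $L\xrightarrow{\sim}L^{n}$ with the inclusion $L^{n}\hookrightarrow L$ produces an injective endomorphism of $L$, hence an element of $\Hom(L,L)=\RR$; thus multiplication by some regular $r\in\RR$ carries $L$ onto $L^{n}$, i.e. $L^{n}=rL$. Writing $L^{n}=L\cdot L^{n-1}$ gives $L^{n-1}\subseteq (rL:_Q L)=r\,\Hom(L,L)=(r)$, so $L^{n-1}=rJ$ for an ideal $J\subseteq\RR$. Substituting back, $rL=L\cdot L^{n-1}=r(LJ)$, and cancelling the regular element $r$ yields $L=LJ$. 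As $L$ is finitely generated and faithful, the determinant (Nakayama) trick forces $J=\RR$, so $L^{n-1}=(r)$ is principal. For $n\ge 2$ this exhibits $L$ as invertible, since $L\cdot(r^{-1}L^{n-2})=\RR$; an invertible ideal in a local ring is principal, so $L\simeq\RR$ and hence $\C\simeq\RR$, proving $\RR$ is Gorenstein.

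The hard part will be the first step, the identity $\Hom(L,L)=\RR$: the subtlety is that the module isomorphism $L^{n}\simeq\C$ must be promoted to an equality of the concrete endomorphism rings $(L^{n}:_Q L^{n})$ and $\RR$ as subrings of $Q$, which is why I insist that the conjugation isomorphism be $\RR$-linear and compatible with the structure maps. Once this is secured, the remaining steps are formal manipulations of fractional ideals, together with the cancellation $L=LJ\Rightarrow J=\RR$ and the reduction ``a power of $L$ is principal $\Rightarrow L$ is invertible,'' both of which are special to the local one-dimensional setting.
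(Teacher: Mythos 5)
Your proof is correct, and its first half is exactly the paper's argument: both establish $\Hom(L,L)=(L:_QL)\subseteq(L^n:_QL^n)=(\C:_Q\C)=\RR$ and then invoke Proposition~\ref{recog} to conclude that $L$ is a canonical ideal, hence $L\simeq\C\simeq L^n$. Where you genuinely diverge is the endgame. The paper iterates the isomorphism $\C\simeq\C^n$ to get $\C\simeq\C^{2n-1}$ and so $\C\simeq\C^m$ for infinitely many $m$, then quotes \cite[Proposition 5.3]{blue1} to conclude Gorensteinness from $\Hom(\C^m,\C^m)=\RR$ for infinitely many $m$. You instead stay self-contained: you realize the isomorphism $L\simeq L^n$ composed with the inclusion $L^n\subseteq L$ as multiplication by a regular element $r\in\Hom(L,L)=\RR$, so $L^n=rL$; then $L^{n-1}\subseteq(rL:_QL)=r(L:_QL)=(r)$, writing $L^{n-1}=rJ$ and cancelling the regular element $r$ from $rL=r(LJ)$ gives $L=LJ$, the determinant trick forces $J=\RR$, so $L^{n-1}=(r)$ is principal, and a regular ideal with a principal power is invertible, hence principal over a local ring; thus $\C\simeq L\simeq\RR$. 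All of these steps check out, and your route buys independence from the prequel paper at the cost of a few extra fractional-ideal manipulations. One shared caveat: both arguments silently require $n\geq 2$ (for $n=1$ the statement is empty or false, since by Theorem~\ref{corecan}(2) every canonical ideal is automatically irreducible, so $L=\C$, $n=1$ would force every such ring to be Gorenstein); you at least flag the restriction explicitly, while the paper's iteration $\C\simeq\C^{2n-1}$ degenerates for $n=1$ without comment.
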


\begin{proof} Since $L^n\simeq \C$ for some integer $n$, we obtain the following. 
\[ \Hom(L, L) = (L :_{Q} L) \subset (L^{n} :_{Q} L^{n}) = (\C :_{Q} \C )  = \Hom(\C, \C) = \RR. \]
Since $L$ is irreducible and $\Hom(L, L)=\RR$, by Proposition~\ref{recog}, we get $L \simeq \C$. Moreover, we have $\C \simeq \C^{n}$. Hence
\[ \C \simeq \C^{n} = \C^{n-1} \cdot \C \simeq \C^{n-1} \cdot \C^{n} = \C^{2n-1}. \]
By iteration we get $\C \simeq \C^m$ for infinitely many values of $m$. This means that 
\[ \Hom(L^m, L^m) \simeq \Hom(\C^{m}, \C^{m}) \simeq \Hom(\C, \C) = \RR \]  for infinitely many values of $m$. By \cite[Proposition 5.3]{blue1}, $\RR$ is Gorenstein. 
\end{proof}

We conclude this section with the following open questions. 

\begin{Question}{\rm
\begin{enumerate}[(1)]
\item If $L^n \simeq \C$ for some integer $n$ and $L$ is reflexive, then must  $\RR$ be Gorenstein?
\item Suppose there is an ideal $L$ such that $L^2 = \C$. Is $\cdeg(\RR)  $ even?
\item Let $(\RR, \m)$ be a Cohen-Macaulay local ring of dimension $d\geq 2$. Let $\C$ be a Cohen-Macaulay ideal of codimension $1$. What is  a criterion for $\C$ to be a canonical ideal ?
\end{enumerate}
}\end{Question}

\section{Minimal values of canonical degrees}
\noindent
Now we begin to examine the significance of the minimal value of $\ddeg(\RR)$.  First we recall the definition of almost Gorenstein rings
 (\cite{BF97, GMP11, GTT15}).

\begin{Definition}{\rm (\cite[Definition 3.3]{GTT15})  A Cohen-Macaulay local ring $\RR$ with a canonical module $\omega_{\RR}$  is said to be an {\em
almost Gorenstein} ring if there exists an exact sequence of $\RR$-modules
${\ds 0 \rightarrow \RR \rightarrow \omega_{\RR} \rightarrow X \rightarrow  0}$
such that $\nu(X)=\e_0(X)$, where $\nu(X)$ is the minimal number of generators of $X$. In particular if $\dim(\RR)=1$, then $X \simeq (\RR/\m)^{r-1}$, where $r$ is the type of $\RR$.
}\end{Definition}

\begin{Theorem}\label{AGorddeg} Let $(\RR,\m)$ be a Cohen-Macaulay  local ring of dimension $1$
 with a canonical ideal $\C$.  If $\RR$ is almost Gorenstein,  then $\ddeg(\RR) =1$.
\end{Theorem}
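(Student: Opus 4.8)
The plan is to use the characterization of almost Gorenstein rings together with the length formula $\ddeg(\RR) = \lambda(\RR/\C) - \lambda(\RR/\C^{**})$ established right after Theorem~\ref{genddeg1}. Since $\RR$ is almost Gorenstein of dimension $1$, the defining exact sequence $0 \rar \RR \rar \omega_{\RR} \rar X \rar 0$ has $X \simeq (\RR/\m)^{r-1}$, where $r = r(\RR)$ is the type. The first step is to transport this to a concrete statement about a canonical ideal $\C$: replacing $\omega_{\RR}$ by an isomorphic copy realized as a canonical ideal, the map $\RR \rar \omega_{\RR}$ is multiplication by a regular element, so up to isomorphism we may assume there is a regular element $c \in \C$ with $\C/(c) \simeq (\RR/\m)^{r-1}$. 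In particular $(c)$ is a minimal reduction of $\C$ and $\cdeg(\RR) = \lambda(\C/(c)) = r-1$, recovering the almost Gorenstein characterization $\cdeg(\RR) = r(\RR)-1$ recorded in the Remark.

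Next I would compute $\ddeg(\RR)$ by relating it to the trace, using Proposition~\ref{resisddeg1}, which gives $\ddeg(\RR) = \lambda(\RR/\tr(\C))$ in dimension $1$. So it suffices to show $\lambda(\RR/\tr(\C)) = 1$, i.e.\ that $\tr(\C) = \m$. The structure coming from $\C/(c) \simeq (\RR/\m)^{r-1}$ forces $\m \C \subset (c)$, which in turn controls $\C^{*} = c^{-1}((c):_\RR \C)$ via Proposition~\ref{bidual}. The key computation is to show that the trace ideal $\tr(\C) = \C \cdot \C^{*}$ equals the maximal ideal $\m$: the inclusion $\tr(\C) \subset \m$ holds because $\RR$ is not Gorenstein (so $\tr(\C) \neq \RR$ and it is a proper ideal, hence contained in $\m$), while the reverse inclusion $\m \subset \tr(\C)$ should follow from the almost Gorenstein condition $\m\C \subset (c) \subset \C$, which produces enough homomorphisms $\C \rar \RR$ to generate $\m$ inside the trace.

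Alternatively, and perhaps more cleanly, I would work directly with the bidual. From $\m\C \subset (c)$ one gets $\C^{**} \subset \m^{-1}(c) = c\,\m^{-1}$, and since $\RR$ is not a DVR the fractional ideal $\m^{-1} = (\RR :_Q \m)$ strictly contains $\RR$ with $\lambda(\m^{-1}/\RR) = r(\RR)$; chasing the inclusions $(c) \subset \C \subset \C^{**} \subset c\,\m^{-1}$ and using $\lambda(\C/(c)) = r-1$ together with $\lambda(c\,\m^{-1}/(c)) = \lambda(\m^{-1}/\RR) = r$ should pin down $\lambda(\C^{**}/\C) = 1$, which is exactly $\ddeg(\RR) = 1$. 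The main obstacle I anticipate is justifying the precise containment $\C^{**} \subset c\,\m^{-1}$ and verifying it is not strict in the relevant step — equivalently, showing that the almost Gorenstein condition makes $\C^{**}$ as large as the length count permits, so that the single unit of length separating $\C$ from its bidual is neither collapsed nor overshot. This amounts to checking that $\C^{**}/\C$ is killed by $\m$ and is exactly one-dimensional over the residue field, which is where the hypothesis $X \simeq (\RR/\m)^{r-1}$ (as opposed to a more general $X$) does the essential work.
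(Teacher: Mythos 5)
Your proposal is correct. Your second (``alternatively'') argument is essentially the paper's own proof: the paper applies $\Hom(\cdot,(c))$ to $0 \rar (c) \rar \C \rar X \rar 0$ to conclude that $(c):_{\RR}\C=\m$, hence $\C^{**}=(c):_{\RR}\m$, whose image in $\RR/(c)$ is the socle, of dimension $r$; subtracting $\lambda(\C/(c))=r-1$ gives $\ddeg(\RR)=1$. The obstacle you anticipate --- that $\C^{**}\subset c\,\m^{-1}$ might be strict --- dissolves at once: since $X$ is killed by $\m$ we have $\m\C\subset(c)$, so $\m\subset (c):_{\RR}\C$, and $(c):_{\RR}\C$ is a proper ideal (otherwise $\C=(c)$ and $\RR$ is Gorenstein); hence $(c):_{\RR}\C=\m$ exactly and $\C^{**}=(c):((c):\C)=(c):\m=c\,\m^{-1}$, with $\lambda\bigl(((c):_{\RR}\m)/(c)\bigr)=r$. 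Your first route, via Proposition~\ref{resisddeg1} and the trace, is genuinely different from the paper's and also closes in one line: from $\m\C\subset(c)$ one gets $c^{-1}\m\subset\C^{*}$, so $\m=c\cdot(c^{-1}\m)\subset\C\cdot\C^{*}=\tr(\C)\subset\m$, i.e.\ $\tr(\C)=\m$ and $\ddeg(\RR)=\lambda(\RR/\m)=1$. What the trace route buys is that it simultaneously recovers the implication ``almost Gorenstein $\Rightarrow$ nearly Gorenstein''; what the bidual route buys is that it stays inside the definition of $\ddeg$ without invoking the duality behind Proposition~\ref{resisddeg1}. One small point, common to your write-up and to the paper: the argument tacitly assumes $\RR$ is not Gorenstein (otherwise $\ddeg(\RR)=0$), and this is exactly where $(c):_{\RR}\C\neq\RR$, respectively $\tr(\C)\neq\RR$, is used.
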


\begin{proof} Let $r$ be the type of $\RR$ and $(c)$ a minimal reduction of $\C$. Since $\RR$ is almost Gorenstein, there exists an exact sequence of $\RR$-modules
\[ 0 \rightarrow (c) \rightarrow \C \rightarrow X \rightarrow  0 \]
such that $X \simeq (\RR/\m)^{r-1}$.  By applying $\Hom(\cdot, (c))$ to the exact sequence above, we obtain 
\[ 0\rar  \Hom(\C, (c)) \rar \RR \rar  \Ext^{1}(X, (c)) \rar \Ext^{1}(\C, (c)) \rar 0.\]
The image of $\Hom(\C, (c))$ in $\RR$ is a proper ideal of $\RR$  because  otherwise $\C= (c)$.  Thus, $\Hom(\C, (c)) = \m$. 
On the other  hand, by Proposition~\ref{bidual}, $\C^{**} = \Hom(\m, (c))$ or $\C^{**}/(c)$ is isomorphic to the socle of $\RR/(c)$  in $\RR$.
Therefore, we obtain 
\[\ddeg(\RR) = \lambda(C^{**}/(c)) - \lambda(C/(c)) = \dim(\mbox{Soc}(\RR/(c)) ) - \dim(X) = r-(r-1)= 1. \qedhere\] 
\end{proof}

The example below shows that the
converse does not hold true.

\begin{Example}\label{monoex}{\rm Consider the monomial ring 
  $\RR = \mathbb{Q}[t^5, t^7, t^9]$. 
 Then we have a presentation $\RR  \simeq \mathbb{Q}[X, Y, Z]/P $, with $P = (Y^2-XZ, X^5-YZ^2, Z^3-X^4 Y)$. Let
  us examine some properties of $\RR$. We denote the images of $X, Y, Z$ in $\RR$   by $x, y, z$ respectively.
  The ideal $\C=(x, y)$ is the canonical ideal of $\RR$ (see the statement above Proposition~\ref{preaGor}). Since $(x)$ is a minimal reduction of $\C$, we get  $\cdeg(\RR) = \lambda(\C/(x))  = 2$. On the other hand, the type of $\RR$ is $2$. Thus, $\RR$ is not almost Gorenstein (\cite[Proposition 3.3]{blue1}). However, $\C^{**} = (x):  ((x):\C)$ satisfies $\lambda(\C^{**}/\C) = 1$  by {\em Macaulay2} calculation.
This shows that $\ddeg(\RR) = 1$. This example shows that $\ddeg(\RR) = 1$ holds  in a larger class of  rings than almost Gorenstein rings (for dimension $1$).
}\end{Example}

\begin{Definition}\label{Gotoring} {\rm  A Cohen-Macaulay local ring $\RR$ of dimension $d$ is a {\em Goto ring} if it has a canonical
ideal and $\ddeg(\RR) =1$.
}\end{Definition}

\begin{Proposition}\label{AGorddeg2} Let $(\RR,\m)$ be a Goto ring of dimension $d$ with a canonical ideal $\C$.  Let $\p$ be a prime ideal of height $1$ such that $\RR_{\p}$ is not Gorenstein. 
Suppose that  $\C$ is equimultiple.
\begin{enumerate}[{\rm (1)}]  
\item If $d=2$, then $\C^{**}/\C \simeq \RR/\p$.
\item In general, suppose that $\RR$ is complete and contains a field. Then $\C^{**}/\C \simeq \RR/\p$.
\end{enumerate}
\end{Proposition}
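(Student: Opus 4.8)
The plan is to pin down the module $\B=\C^{**}/\C$ completely from the hypothesis $\ddeg(\RR)=1$, and then to recognize it as a rank-one reflexive module over the regular local ring $\RR/\p$.

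First I would extract the numerical consequences of $\ddeg(\RR)=1$. Using the decomposition in Theorem~\ref{genddeg1}, $1=\ddeg(\RR)=\sum_{\height\q=1}\ddeg(\RR_\q)\deg(\RR/\q)$ is a sum of non-negative integers in which every non-Gorenstein height-one prime $\q$ contributes at least $\deg(\RR/\q)\ge 1$. Hence there is a \emph{unique} height-one prime at which $\RR$ fails to be Gorenstein; by hypothesis this is $\p$, and moreover $\ddeg(\RR_\p)=\lambda(\RR_\p/\C_\p)-\lambda(\RR_\p/\C^{**}_\p)=1$ and $\deg(\RR/\p)=1$. The equimultiplicity of $\C$ guarantees that $\RR_\p$ is genuinely a one-dimensional Goto ring with a principal reduction, so this local reading and the reduction-based computation of $\C^{**}$ in Proposition~\ref{bidual} are legitimate.

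Next I would determine the structure of $\B$ as an $\RR$-module. By Theorem~\ref{genddeg1}, $\B$ embeds into $\RR/\C$, which is Cohen--Macaulay of dimension $d-1$, so every associated prime of $\B$ has height one. Since $\B_\q=\C^{**}_\q/\C_\q$ is nonzero exactly when $\RR_\q$ is non-Gorenstein (Theorem~\ref{corecan}-(3)), the only candidate is $\p$, whence $\Ass(\B)=\{\p\}$, $\supp(\B)=V(\p)$, and $\B$ is unmixed of dimension $d-1$. I would then observe that $\p\B=0$: since $(\p\B)_\p=\p\RR_\p\cdot\kappa(\p)=0$ we have $\dim(\p\B)\le d-2$, yet if $\p\B\ne 0$ then $\Ass(\p\B)\subseteq\Ass(\B)=\{\p\}$ forces $\p\in\supp(\p\B)$ and hence $\dim(\p\B)\ge\dim\RR/\p=d-1$, a contradiction. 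Thus $\B$ is a finitely generated $\RR/\p$-module of rank $\lambda(\B_\p)=1$. The regularity of $\RR/\p$ then comes from $\deg(\RR/\p)=\rme_0(\m,\RR/\p)=1$ via the multiplicity-one theorem: in part (1), $\dim\RR/\p=1$ and a one-dimensional local domain of multiplicity one is already a DVR, needing no completeness, whereas in part (2) completeness makes $\RR/\p$ a formally equidimensional complete local domain, so multiplicity one forces regularity in any dimension.

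Finally I would establish that $\B$ is reflexive over $\RR/\p$. Dualizing $0\to\tr(\C)\to\RR\to\RR/\tr(\C)\to 0$ into $\C$, exactly as in the proof of Proposition~\ref{resisddeg1}, identifies $\B\cong\Ext^1(\RR/\tr(\C),\C)=K_{\RR/\tr(\C)}$, the canonical module of $\RR/\tr(\C)$ (here $\grade(\tr(\C))=1$ equals the codimension), and canonical modules satisfy Serre's condition $S_2$. Since $\Ass_{\RR/\p}(\B)=\{(0)\}$, this makes $\B$ a torsion-free $S_2$-module over the normal domain $\RR/\p$, hence reflexive; and a rank-one reflexive module over the regular local (hence factorial) ring $\RR/\p$ is free of rank one, giving $\B\cong\RR/\p$. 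I expect the reflexivity step in part (2) to be the main obstacle: when $d=2$ the bidual $\C^{**}$ is automatically maximal Cohen--Macaulay (being reflexive over a two-dimensional Cohen--Macaulay ring), so $\B$ is Cohen--Macaulay of dimension one and the conclusion is immediate, but for $d>2$ one only gets $S_2$, and the argument must pass through the identification $\B\cong K_{\RR/\tr(\C)}$ and the descent of the $S_2$ property to $\RR/\p$ — which is precisely where completeness and the canonical-module machinery are genuinely needed.
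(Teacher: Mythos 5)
Your argument is correct and follows the same skeleton as the paper's proof: extract from $\ddeg(\RR)=1$ that $\p$ is the only non-Gorenstein height-one prime, that $\deg(\RR/\p)=1$ and $\lambda(\B_\p)=1$; show $\B=\C^{**}/\C$ is a torsion-free rank-one $\RR/\p$-module; get regularity of $\RR/\p$ from multiplicity one (the paper also invokes Nagata for exactly this); and conclude by freeness of rank-one reflexive modules over a factorial ring. Your explicit verification that $\p\B=0$ fills in a step the paper only asserts. The one place you genuinely diverge is the $(S_2)$ property of $\B$ in part (2). You predict that the exact-sequence route only works for $d=2$ and that for $d>2$ one must pass through $\B\simeq\Ext^1(\RR/\tr(\C),\C)=K_{\RR/\tr(\C)}$ together with the general fact (Aoyama/Schenzel) that modules of deficiency satisfy $(S_2)$. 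In fact the paper obtains $(S_2)$ in all dimensions directly from the sequence $0\rar \C^{**}/\C\rar\RR/\C\rar\RR/\C^{**}\rar 0$, using that $\RR/\C$ is Cohen--Macaulay of dimension $d-1$ and that $\RR/\C^{**}$ satisfies $(S_1)$ (the reflexive ideal $\C^{**}$ being unmixed of height one), so a depth count at each prime of $V(\p)$ yields $\depth\B_\q\geq\min\{\depth(\RR/\C)_\q,\ \depth(\RR/\C^{**})_\q+1\}\geq\min\{2,\dim\B_\q\}$. Your route is valid but imports a heavier external fact about canonical modules of non-Cohen--Macaulay quotients; the paper's is self-contained once one notes the $(S_1)$ property of $\RR/\C^{**}$. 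Both arguments then finish identically.
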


\begin{proof} Since $\ddeg(\RR)=1$, this means $\C^{**}/\C$ is $\p$-primary. 

\smallskip

\noindent (1) Since $d=2$, the module $\C^{**}/\C$ is Cohen-Macaulay of dimension $1$ and multiplicity $1$. Moreover,  $\C^{**}/\C$ is 
 an $\RR/\p$-module of rank $1$ and $\RR/\p$ is a discrete valuation domain. Hence  $\C^{**}/\C \simeq \RR/\p$.
 
\smallskip

\noindent (2)  In all dimensions, $\C^{**}/\C$ has only $\p$ for associated prime and is a torsion-free $\RR/\p$-module of rank $1$. Thus it is
isomorphic to an ideal of $\RR/\p$. Moreover, 
$\C^{**}/\C$ has  the condition $S_2$ of Serre from the exact sequence
\[ 0 \rar \C^{**}/\C \rar \RR/\C \rar \RR/\C^{**} \rar 0,\]
where $\RR/\C^{**}$ has the condition $S_1$. By \cite{Nagata}, $\RR/\p$ is a regular local ring and therefore $\C^{**}/\C \simeq \RR/\p$.
\end{proof}

\section{Change of rings}

\noindent
Let $\varphi:\RR \rar \SS$ be a homomorphism of Cohen-Macaulay rings. We examine
a few cases of the relationship between $\ddeg(\RR)$ and $\ddeg(\SS)$ induced by $\varphi$. We skip polynomial, power series and
completion since it is similar to the treatment of $\cdeg(\RR)$  in \cite[Section 6]{blue1}.  
If $\RR \rar \SS$ is a finite injective homomorphism of Cohen-Macaulay rings
and $\C$ is a canonical ideal of $\RR$, then $\D = \Hom(\SS, \C)$ is a canonical module for $\SS$, according to 
\cite[Theorem 3.3.7] {BH}.
 Recall how $\SS$ acts on
$\D$: If $f\in \D$ and $a, b\in \SS$, then $a\cdot f(b) = f(ab)$.

\subsubsection*{Augmented rings}
A case in point is that of the augmented extensions.
Let $(\RR,\m)$ be an $1$-dimensional Cohen-Macaulay local ring with a canonical ideal $\C$.
Assume that $\RR$ is not a valuation domain.
Let $\AA$ be the augmented ring $\RR \ltimes \m$.
That is, $\AA = \RR\oplus \m \epsilon$, $\epsilon^2 = 0$. (Just to keep the components apart in computations we use $\epsilon$ as a place holder.)
Let $(c)$ be a minimal reduction of the canonical ideal $\C$. We may assume that $\C\subset \m^2$ by replacing $\C$ by $c\C$ if necessary.
Then a canonical module of $\AA$ is  $ \Hom_{\RR} (\AA, \C)$ and, by \cite[Proposition 6.7]{blue1}, we have an $\AA$-isomorphism
\[ \Hom_{\RR} (\AA, \C) \simeq (\C:_{\RR} \m) \times \C. \]
Moreover, $ (\C:_{\RR} \m) \times \C$ is a canonical ideal of $\AA$ because $\C \subset \m^{2}$ and $(\C :_{\RR} \m) \subset \m$.

\begin{Proposition} Let $(\RR, \m)$ be a $1$-dimensional Cohen-Macaulay local ring with a canonical ideal and let $\AA = \RR\ltimes \m$ the augmented ring. Suppose that $\RR$ is not Gorenstein. Then
\[ \ddeg(\AA) = 2\, \ddeg (\RR) -1.\]
In particular if $\RR$ is a Goto ring, then $\AA$ is also a Goto ring.
\end{Proposition}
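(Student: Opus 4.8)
The plan is to compute $\ddeg(\AA)$ through the trace ideal of the canonical ideal $\D=(\C:_\RR\m)\times\C$ of $\AA$. Since $\AA=\RR\ltimes\m$ is again a one-dimensional Cohen--Macaulay local ring (as $\m$ is a maximal Cohen--Macaulay $\RR$-module) with canonical ideal $\D$, Proposition~\ref{resisddeg1} gives $\ddeg(\AA)=\lambda_\AA(\AA/\tr(\D))$. Two preliminary remarks make this tractable. First, $\AA$ and $\RR$ have the same residue field $k$, so an $\AA$-composition series of a finite-length module is also an $\RR$-composition series; hence $\lambda_\AA=\lambda_\RR$ throughout. Second, as $\m$ contains a regular element we have $\m\otimes_\RR Q=Q$, so the total ring of fractions of $\AA$ is $Q_\AA=Q\oplus Q\epsilon$, and all the Hom's may be computed as colon ideals inside $Q_\AA$ in the spirit of Proposition~\ref{bidual}. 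Write $\C_1:=\C:_\RR\m$, so $\D=\C_1\oplus\C\epsilon$; by Theorem~\ref{corecan}-(2) one has $\C_1=\C:_Q\m$ and $\lambda(\C_1/\C)=1$, say $\C_1=\C+\RR s$.

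First I would compute $\D^{*}=\AA:_{Q_\AA}\D$. A direct element-wise calculation yields
\[ \D^{*}=\bigl[\C_1^{*}\cap(\m:_Q\C)\bigr]\oplus(\m:_Q\C_1)\,\epsilon. \]
The hypothesis that $\RR$ is not Gorenstein collapses both summands onto $\C_1^{*}$: from $\tr(\C)=\C\C^{*}\subseteq\m$ we get $\C_1^{*}\subseteq\C^{*}\subseteq\m:_Q\C$, while $\C_1$ cannot be invertible --- otherwise $\C=g\m\cong\m$ would be a canonical ideal, forcing $\RR$ to be a DVR by Proposition~\ref{recogcor} --- so that $\C_1^{*}\subseteq\m:_Q\C_1$. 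Thus $\D^{*}=\C_1^{*}\oplus\C_1^{*}\epsilon$, and expanding $\tr(\D)=\D\cdot\D^{*}$ gives $\tr(\D)=\tr(\C_1)\oplus\tr(\C_1)\,\epsilon$. Since $\tr(\C_1)\subseteq\m$ and $\lambda(\m/\tr(\C_1))=\lambda(\RR/\tr(\C_1))-1$, taking lengths produces
\[ \ddeg(\AA)=\lambda(\RR/\tr(\C_1))+\lambda(\m/\tr(\C_1))=2\,\lambda(\RR/\tr(\C_1))-1. \]

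It remains to identify $\lambda(\RR/\tr(\C_1))$ with $\ddeg(\RR)=\lambda(\RR/\tr(\C))$, and this is the heart of the matter. I would first note $\C_1^{*}=\C^{*}$: if $q\in\C^{*}$ then $q\C\subseteq\tr(\C)\subseteq\m$, so $(qs)\C=s(q\C)\subseteq s\m\subseteq\C$, whence $qs\in\C:_Q\C=\RR$ and $q\in\C_1^{*}$. Consequently $\tr(\C_1)=\C_1\C^{*}\supseteq\C\C^{*}=\tr(\C)$. Applying $\Hom(-,\C)$ to $0\to\tr(\C_1)\to\RR\to\RR/\tr(\C_1)\to 0$, using $\Hom(\C_1,\C)=\C:_Q\C_1=\m$ (the reflexivity $\Hom(\Hom(\m,\C),\C)=\m$) so that $\Hom(\tr(\C_1),\C)=\Hom(\C^{*},\m)=\m:_Q\C^{*}$, and local duality exactly as in Proposition~\ref{resisddeg1}, one gets $\lambda(\RR/\tr(\C_1))=\lambda((\m:_Q\C^{*})/\C)$, while $\ddeg(\RR)=\lambda((\RR:_Q\C^{*})/\C)$. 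As $\m:_Q\C^{*}\subseteq\RR:_Q\C^{*}$, the two lengths agree precisely when $\m:_Q\C^{*}=\RR:_Q\C^{*}$, that is, when $\C^{**}\C^{*}\subseteq\m$, i.e. when $\C^{*}$ is not invertible.

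So the one genuinely non-formal point, which I expect to be the main obstacle, is the claim that $\C^{*}$ is not invertible when $\RR$ is not Gorenstein. I would prove it by contradiction: were $\C^{*}$ invertible it would be principal, $\C^{*}=p\RR$, and then $\tr(\C)=\C\C^{*}=p\C\cong\C$ would be at once a canonical ideal and a trace ideal. The standard identity $\RR:_Q I=I:_Q I$ for a trace ideal $I$, together with $\tr(\C):_Q\tr(\C)=\C:_Q\C=\RR$, would force $(\tr\C)^{*}=\RR$. But $\tr(\C)$ is $\m$-primary and proper, so $\grade(\tr\C,\RR)=1$, hence $\Ext^{1}(\RR/\tr\C,\RR)\neq 0$; dualizing $0\to\tr\C\to\RR\to\RR/\tr\C\to0$ shows $(\tr\C)^{*}/\RR\cong\Ext^{1}(\RR/\tr\C,\RR)\neq0$, a contradiction. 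This yields $\lambda(\RR/\tr(\C_1))=\ddeg(\RR)$ and hence $\ddeg(\AA)=2\,\ddeg(\RR)-1$; in particular $\ddeg(\RR)=1$ gives $\ddeg(\AA)=1$, so a Goto ring $\RR$ produces a Goto ring $\AA$.
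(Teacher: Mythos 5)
Your proof is correct, but it takes a genuinely different route from the paper's. The paper works directly with the bidual: after computing $\D^{*}=\C^{*}\oplus\C^{*}\epsilon$ essentially as you do (your expression $\bigl[\C_1^{*}\cap(\m:_Q\C)\bigr]\oplus(\m:_Q\C_1)\epsilon$ collapses to this since $\C_1^{*}=\C^{*}$), it computes $\D^{**}=\C^{**}\oplus\C^{**}\epsilon$ by the same kind of componentwise calculation and reads off $\ddeg(\AA)=\lambda(\C^{**}/\C)+\lambda(\C^{**}/\C_1)=\ddeg(\RR)+(\ddeg(\RR)-1)$, using only $\lambda(\C_1/\C)=1$. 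You instead route everything through Proposition~\ref{resisddeg1} applied to $\AA$, computing $\tr(\D)=\tr(\C_1)\oplus\tr(\C_1)\epsilon$ and then converting $\lambda(\RR/\tr(\C_1))$ back into $\ddeg(\RR)$ by canonical duality; this costs you the extra lemmas $\C_1^{*}=\C^{*}$, $\Hom(\C_1,\C)=\m$, and the non-invertibility of $\C^{*}$, where the paper's bidual computation is a one-line length count once $\D^{**}$ is known. What your approach buys is that it makes explicit a point the paper passes over with the word ``Similarly'': the identification $\D^{**}=\C^{**}\oplus\C^{**}\epsilon$ requires $\m:_Q\C^{*}=\RR:_Q\C^{*}$, i.e.\ that $\C^{*}$ is not invertible, and your trace-ideal argument for this (if $\C^{*}=p\RR$ then $\tr(\C)\cong\C$ would be a trace ideal with $(\tr(\C))^{*}=\RR$, contradicting $\grade(\tr(\C))=1$) is a genuine addition to the paper's argument. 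Two very minor points: when ruling out invertibility of $\C_1$ you should also dispose of the case $g^{-1}\C=\RR$ (it forces $\C$ principal, hence $\RR$ Gorenstein, which is excluded), and note that the paper's canonical ideal is $\D=\C_1\oplus\C\epsilon$ rather than $\C\oplus\C_1\epsilon$, though the component bookkeeping does not affect any length.
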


\begin{proof}
Let $\C$ be the canonical ideal of $\RR$ and $L= \C :_{\RR} \m$. Then 
$\mathcal{D} = L \times \C$ the canonical ideal of $\AA$. Let $Q$ be the total quotient ring of $\RR$. Then 
the total  quotient ring of $\AA$ is $Q \ltimes Q$.  If
$(a,b) \in Q\ltimes Q$ is in
\[ \Hom(\mathcal{D}, \AA)= \Hom(( L \oplus  \C \epsilon ), (\RR\oplus  \m\epsilon)) ,
\] then
$a \C \subset \RR$, or $a\in \C^{*}$. On the other hand, $b  \C\subset \m$ or $b\in \C^{*} $. For the converse, Let $(a, b ) \in  \C^{*} \oplus \C^{*}$.  
Since $\RR$ is not Gorenstein, $a\C \neq \RR$ and 
 $a L \subset \m $. Thus
 \[ \mathcal{D}^{*} = \C^{*} \oplus \C^{*} \epsilon.\]
 Similarly, we have 
\[ \D^{**} = \Hom((\C^{*} \oplus \C^{*}\epsilon ), (\RR\oplus \m \epsilon )) =  \C^{**} \oplus \C^{**} \epsilon .\]
This gives
\[ \D^{**}/\D = ( \C^{**}/\C) \oplus   (\C^{**}\epsilon /  L \epsilon) .\]
Therefore we have
\[ \ddeg(\AA) = \lambda ( \D^{**}/\D ) = \lambda ( \C^{**}/\C) +   \lambda(\C^{**}\epsilon /  L \epsilon) = 2\, \ddeg (\RR) -1. \qedhere \]
\end{proof}

\subsubsection*{Products} Let $k$ be a field, and let $\AA_1, \AA_2$ be two finitely generated Cohen-Macaulay $k$-algebras. Let us look at the canonical degrees of the
{\em product} $\AA = \AA_1 \otimes_k \AA_2$.  If $\AA_i$, $i=1,2$, are localizations of finitely generated $k$-algebras, then we may assume that $\AA$ is an appropriate localization. 

\begin{Proposition}\label{prod} Let $\AA_i$, $i=1,2$, be as above. Suppose that there exist equimultiple canonical ideals $\C_{1}$ and $\C_{2}$ of $\AA_{1}$ and $\AA_{2}$ respectively.
\begin{enumerate}[{\rm (1)}]
\item The product $\AA  = \AA_1 \otimes_k \AA_2$ is Cohen-Macaulay.
\item The canonical ideal of $\AA$ is  $\C = \C_1 \otimes \C_2$. Moreover, we have  $\C^{**} = \C_1^{**} \otimes \C_2^{**}$.
\item We have the following.
\[ \cdeg(\AA) = \cdeg(\AA_1) \cdot \deg(\AA_2) + \deg(\AA_1) \cdot \cdeg(\AA_2),\]
\[ \ddeg(\AA) = \ddeg(\AA_1) \cdot \deg(\AA_2) + \deg(\AA_1) \cdot \ddeg(\AA_2).\]
\end{enumerate}
\end{Proposition}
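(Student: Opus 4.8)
The plan is to reduce all three parts to two structural facts about tensoring over a field: that $-\otimes_k-$ is exact in each variable, and that it is multiplicative on Hilbert--Samuel multiplicities. For part (1) I would invoke the standard behaviour of tensor products of $k$-algebras: the flat map $\AA_1\to\AA=\AA_1\otimes_k\AA_2$ has fibers $\kappa(\p)\otimes_k\AA_2$, which are Cohen--Macaulay, so $\dim$ and $\depth$ are additive and $\AA$ is Cohen--Macaulay (cf.\ \cite{BH}); after localizing at the relevant maximal ideal we may and do assume $\AA$ is local. For part (2) I would start from the canonical-module formula $\omega_{\AA}\cong\omega_{\AA_1}\otimes_k\omega_{\AA_2}$ for a tensor product over a field (again \cite{BH}). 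Since $k$ is a field, tensoring the inclusions $\C_i\hookrightarrow\AA_i$ keeps $\C_1\otimes_k\C_2\hookrightarrow\AA$ injective, so its image $\C=\C_1\otimes_k\C_2$ is an honest ideal of $\AA$ isomorphic to $\omega_{\AA}$, i.e.\ a canonical ideal.

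The identity $\C^{**}=\C_1^{**}\otimes_k\C_2^{**}$ is the first place that needs real work. Here I would prove a K\"unneth-type formula
\[ \Hom_{\AA}(M_1\otimes_k M_2,\; N_1\otimes_k N_2)\;\cong\;\Hom_{\AA_1}(M_1,N_1)\otimes_k\Hom_{\AA_2}(M_2,N_2) \]
for finitely generated modules over the (Noetherian) factors: take finite free presentations of $M_1$ and $M_2$, tensor them over $k$ to presentation of $M_1\otimes_k M_2$, and compare both sides using that $-\otimes_k-$ is exact and commutes with finite direct sums. Applying this with $M_i=\C_i$, $N_i=\AA_i$ gives $\C^{*}=\C_1^{*}\otimes_k\C_2^{*}$, and applying it once more (now with $M_i=\C_i^{*}$) gives $\C^{**}=\C_1^{**}\otimes_k\C_2^{**}$; Proposition~\ref{bidual} may be used to keep the duals inside $Q$ if one prefers to argue with fractional ideals.

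With part (2) in hand, part (3) for $\ddeg$ is the clean step. Writing $\B_i=\C_i^{**}/\C_i$, exactness of $-\otimes_k\C_2$ and of $\C_1^{**}\otimes_k-$ produces the filtration
\[ \C=\C_1\otimes_k\C_2\ \subseteq\ \C_1^{**}\otimes_k\C_2\ \subseteq\ \C_1^{**}\otimes_k\C_2^{**}=\C^{**} \]
with successive quotients $\B_1\otimes_k\C_2$ and $\C_1^{**}\otimes_k\B_2$, both of dimension $\dim\AA-1$. Additivity of multiplicity on this filtration together with the multiplicativity $\deg_{\AA}(V_1\otimes_k V_2)=\deg_{\AA_1}(V_1)\,\deg_{\AA_2}(V_2)$ then give $\ddeg(\AA)=\deg(\B_1)\deg(\C_2)+\deg(\C_1^{**})\deg(\B_2)$; since $\C_2$ and $\C_1^{**}$ have rank one we have $\deg(\C_2)=\deg(\AA_2)$ and $\deg(\C_1^{**})=\deg(\AA_1)$, while $\deg(\B_i)=\ddeg(\AA_i)$ by Theorem~\ref{genddeg1}, and the formula follows. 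The multiplicativity itself I would justify by identifying $\gr_{\m}(\AA)\cong\gr_{\m_1}(\AA_1)\otimes_k\gr_{\m_2}(\AA_2)$ (and likewise for the module), so that the associated Hilbert series multiply and the multiplicities (the values of the $h$-polynomials at $1$) multiply as well.

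For $\cdeg$ the same canonical ideal $\C$ is used, but there is no global minimal reduction to imitate $\deg(\C/(c))$, since $\C_1\otimes_k\C_2$ need not be equimultiple in $\AA$, so I would instead run the localization formula $\cdeg(\AA)=\sum_{\height P=1}\cdeg(\AA_{P})\deg(\AA/P)$. The height-one primes $P$ at which $\AA$ fails to be Gorenstein split into those lying over a height-one (non-Gorenstein) prime of $\AA_1$ together with a minimal prime of $\AA_2$, and the symmetric family; at such a $P$ the ideal $\C$ localizes to (an extension of) the canonical ideal of the corresponding factor --- this is exactly where equimultiplicity of the $\C_i$ is used, ensuring that the factor's minimal reduction survives the localization --- so $\cdeg(\AA_{P})$ matches the factor's local canonical degree while $\deg(\AA/P)$ multiplies as in the $\ddeg$ computation, and summing the two families reproduces the stated formula. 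I expect the genuine obstacles to be precisely the two non-formal inputs: the K\"unneth identity for the biduals in part (2), and the multiplicativity of multiplicity under $\otimes_k$ together with the prime bookkeeping demanded by the $\cdeg$ case.
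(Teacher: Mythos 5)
Your treatment of (1), (2), and the $\ddeg$ half of (3) is sound and, at its core, the same as the paper's: the paper also reduces the $\ddeg$ formula to the decomposition of $\C^{**}/\C$ into the pieces $\C_1^{**}/\C_1\otimes_k\C_2$ and $\C_1\otimes_k\C_2^{**}/\C_2$ and then multiplies degrees; your two-step filtration is in fact a cleaner way to say this, and your K\"unneth argument for $\Hom$ via free presentations correctly supplies the identity $\C^{**}=\C_1^{**}\otimes_k\C_2^{**}$ that the paper uses without comment.

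The problem is the $\cdeg$ half of (3). You abandon the global-reduction argument on the grounds that ``$\C_1\otimes_k\C_2$ need not be equimultiple in $\AA$,'' but that premise is false, and it is exactly the point of the hypothesis that the $\C_i$ be equimultiple. If $(c_i)$ is a minimal (principal) reduction of $\C_i$, so $\C_i^{n+1}=c_i\C_i^n$ for $n\gg 0$, then
\[
\C^{n+1}=\C_1^{n+1}\otimes_k\C_2^{n+1}=c_1\C_1^{n}\otimes_k c_2\C_2^{n}=(c_1\otimes c_2)\,\C^{n},
\]
so $(c)=(c_1\otimes c_2)$ is a principal reduction of the height-one ideal $\C$, which is therefore equimultiple. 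This is precisely the paper's proof: it computes $\cdeg(\AA)=\deg(\C/(c))$ from the filtration $(c_1)\otimes(c_2)\subseteq\C_1\otimes(c_2)\subseteq\C_1\otimes\C_2$, whose quotients have degrees $\cdeg(\AA_1)\deg(\AA_2)$ and $\deg(\AA_1)\cdeg(\AA_2)$ --- the exact mirror of your $\ddeg$ argument. The replacement route you propose, through the localization formula $\cdeg(\AA)=\sum_{\height P=1}\cdeg(\AA_P)\deg(\AA/P)$, is not carried out: the ``prime bookkeeping'' you defer is genuinely nontrivial (several height-one primes of $\AA$ can lie over a given pair $(\p_1,\p_2)$, residue fields extend, and you must control $\cdeg$ under the base change $(\AA_1)_{\p_1}\to\AA_P$), and none of it is needed. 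Replace that paragraph with the observation above and your proof closes.
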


\begin{proof}
Let  $(c_i)$ be a minimal reduction of $\C_i$,  for each $i=1,2$. Then $(c)= (c_1)\otimes (c_2)$ is a minimal reduction for $\C$. Thus, the assertion follows from
\[ \C/(c) =  (\C_1/(c_1) \otimes \C_2) \oplus (\C_1 \otimes \C_2/(c_2)), \; \mbox{and} \;
 \C^{**}/\C= (\C_1^{**}/\C_1 \otimes \C_2 ) \oplus  ( \C_1 \otimes \C_2^{**}/\C_2). \qedhere \]
\end{proof}

\begin{Corollary} Let $\AA_i$, $i=1,2$, be the same as in {\rm Proposition~\ref{prod}}. Suppose that $\dim(\AA_{i})=1$ for each $i=1, 2$. Let $r_{i}$ be the type of $\AA_{i}$ for each $i=1, 2$. 
\begin{enumerate}[{\rm (1)}]
\item  Suppose that $\AA_1$ and $\AA_{2}$ are almost Gorenstein. Then 
\[ \cdeg(\AA_1 \otimes_k \AA_2) = (r_1-1)\cdot \deg(\AA_2) + \deg(\AA_1) \cdot (r_2-1).\]
\item If $\AA_{1}$ is almost Gorenstein and $2 \deg(\AA_1) = r_1 + 1$, then $\AA_{1} \otimes \AA_{1}$ is almost Gorenstein.   
\end{enumerate}
\end{Corollary}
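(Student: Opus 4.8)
The plan is to derive both parts from the multiplicative formula for $\cdeg$ in Proposition~\ref{prod}(3), together with the characterization recorded in the Remark that a Cohen-Macaulay ring is almost Gorenstein precisely when $\cdeg(\RR) = r(\RR) - 1$.

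For part (1), since $\AA_1$ and $\AA_2$ are almost Gorenstein of dimension $1$, this characterization gives $\cdeg(\AA_1) = r_1 - 1$ and $\cdeg(\AA_2) = r_2 - 1$. Substituting both values into
\[ \cdeg(\AA_1 \otimes_k \AA_2) = \cdeg(\AA_1)\,\deg(\AA_2) + \deg(\AA_1)\,\cdeg(\AA_2) \]
produces the claimed identity at once, so part (1) is pure substitution with no further work.

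For part (2), I would first specialize the same formula to $\AA_2 = \AA_1$, giving $\cdeg(\AA_1 \otimes \AA_1) = 2\,\deg(\AA_1)\,\cdeg(\AA_1)$, and then insert $\cdeg(\AA_1) = r_1 - 1$ to obtain $\cdeg(\AA_1 \otimes \AA_1) = 2(r_1 - 1)\deg(\AA_1)$. Next I would compute the Cohen-Macaulay type of the product. By Proposition~\ref{prod}(2) the canonical module of $\AA_1 \otimes_k \AA_1$ is $\omega_{\AA_1} \otimes_k \omega_{\AA_1}$, and at the relevant maximal ideal $\M$ one has $(\omega_{\AA_1} \otimes_k \omega_{\AA_1})/\M(\omega_{\AA_1} \otimes_k \omega_{\AA_1}) \simeq (\omega_{\AA_1}/\m\omega_{\AA_1}) \otimes_k (\omega_{\AA_1}/\m\omega_{\AA_1})$; counting $k$-dimensions shows the minimal number of generators is multiplicative, so $r(\AA_1 \otimes \AA_1) = r_1^2$. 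Feeding the hypothesis $2\deg(\AA_1) = r_1 + 1$ into the displayed value then yields
\[ \cdeg(\AA_1 \otimes \AA_1) = (r_1 - 1)(r_1 + 1) = r_1^2 - 1 = r(\AA_1 \otimes \AA_1) - 1, \]
and the characterization identifies $\AA_1 \otimes \AA_1$ as almost Gorenstein.

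The two substitutions are routine; the genuine content lies in the last step, which I expect to be the main obstacle. One must justify that the equality $\cdeg = r - 1$ still detects the almost Gorenstein property for the \emph{two}-dimensional ring $\AA_1 \otimes \AA_1$, since the definition of almost Gorenstein is phrased through an exact sequence $0 \to \RR \to \omega_{\RR} \to X \to 0$ with $\nu(X) = \e_0(X)$ rather than directly through $\cdeg$. Because $\C_1 \otimes \C_1$ is equimultiple, $\cdeg$ is computed uniformly over the height-one primes and behaves well under a generic hyperplane section; I would therefore cut down by a general linear form to the one-dimensional situation, where the characterization is available, and transfer the conclusion back up. Alternatively, if a clean citation for the higher-dimensional characterization is unavailable, I would build the defining exact sequence for $\AA_1 \otimes \AA_1$ from the one witnessing almost Gorensteinness of the factor $\AA_1$ and verify that the count $\nu(X) = \e_0(X)$ survives the tensor product over $k$.
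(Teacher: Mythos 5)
Your proof follows the same route as the paper's: the authors simply quote \cite{blue1} for $\cdeg(\AA_i)=r_i-1$ and then say the claim ``follows from Proposition~\ref{prod},'' so part (1) and the numerical computation in part (2) (including $r(\AA_1\otimes\AA_1)=r_1^2$ via Nakayama and $\cdeg(\AA_1\otimes\AA_1)=(r_1-1)(r_1+1)$) are exactly what they do. Where you go beyond the paper is precisely where the paper is silent: the equivalence ``$\cdeg = r-1$ iff almost Gorenstein'' cited from \cite{blue1} is a one-dimensional statement, while $\AA_1\otimes_k\AA_1$ has dimension two, so some extra argument is genuinely needed to conclude part (2). Of your two suggested remedies, the second is the one that works cleanly and is the natural completion of the paper's argument: tensoring the defining sequence $0\rar \AA_1\rar \omega_{\AA_1}\rar k^{r_1-1}\rar 0$ with itself over $k$ (exactness is preserved since $k$ is a field) gives $0\rar \AA_1\otimes_k\AA_1\rar \omega_{\AA_1}\otimes_k\omega_{\AA_1}\rar Y\rar 0$ with $Y$ filtered by $k^{r_1-1}\otimes_k\AA_1$ and $\omega_{\AA_1}\otimes_k k^{r_1-1}$; hence $Y$ is Cohen--Macaulay of dimension one with $\rme_0(Y)=2(r_1-1)\deg(\AA_1)=r_1^2-1$, and since $\nu(Y)\geq \nu(\omega_{\AA_1}\otimes\omega_{\AA_1})-1=r_1^2-1$ while $\nu(Y)\leq \rme_0(Y)$ for a Cohen--Macaulay module of positive dimension, $Y$ is Ulrich and the definition from \cite{GTT15} is verified directly. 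The hyperplane-section alternative is less satisfactory, since descending almost Gorensteinness to $\RR/(x)$ does not by itself let you transfer the conclusion back up without further hypotheses. So: correct, same skeleton as the paper, and your closing step actually repairs an omission in the published proof rather than introducing a gap of your own.
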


\begin{proof} By \cite[Proposition 3.3]{blue1}, we have $\cdeg(\AA_i) = r_{i}- 1$ for each $i=1,2$. Then the assertion follows from Proposition~\ref{prod}.
\end{proof}

\begin{Questions}{\rm
 \begin{enumerate}[(1)]
\item How do we relate $\ddeg(\AA)$ of a graded algebra $\AA$ to $\ddeg(\BB)$ of one of its Veronese subalgebras?
\item If $\SS$ is a finite injective extension of $\RR$, is $\ddeg(\SS) \leq [\SS:\RR]  \cdot \ddeg(\RR)$?
\end{enumerate}
}\end{Questions}

\subsubsection*{Hyperplane sections} \label{hpsection} A  desirable comparison is that   between $\ddeg(\RR)$ and
$\ddeg(\RR/(x))$ for an appropriate regular element $x$. 
 We know that if $\C$ is a canonical module for $\RR$ then
$\C/x \C$ is a canonical module for $\RR/(x)$ with the same number of generators, so type is preserved under
specialization. However $\C/x \C$ may not be isomorphic to an ideal of $\RR/(x)$. Here is a case of good behavior. Suppose
$x$ is regular modulo $\C$. Then from the sequence
\[ 0 \rar \C \rar \RR \rar \RR/\C \rar 0,\]
we get the exact sequence
\[ 0 \rar \C/x\C  \rar \RR/(x) \rar \RR/(\C,x) \rar 0,\] so the canonical module
$\C/x\C$ embeds in $\RR/(x)$. We set $\SS = \RR/(x)$ and $\D = (\C, x)/(x)$ for the image of $\C$ in $\SS$.  We need to compare
$ \ddeg(\RR) = \deg(\RR/\C) - \deg(\RR/\C^{**})$
and $ \ddeg(\SS) = \deg(\SS/\D) - \deg(\SS/\D^{**})$. We can choose $x$ so that $\deg(\RR/\C) = \deg(\SS/\D)$, but need, for instance to show that $\C^{**}$ maps into $\D^{**}$. A model for what we want to  have is
{\cite[Proposition 6.11]{blue1} asserting that if $\C$ is equimultiple then
$ \cdeg(\RR) \leq \cdeg(\RR/(x))$. For bideg$(\RR)$, in accordance with Conjecture~\ref{cdegvsfddeg}
we propose the following.

\begin{Conjecture} {\rm If $\C$ is equimultiple and
$x$ is regular mod $\C$, then $ \ddeg(\RR) \geq \ddeg(\RR/(x))$.
}\end{Conjecture}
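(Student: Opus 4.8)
The plan is to assume, as we may, that $\RR$ has infinite residue field and that $\dim \RR = d \geq 2$; in dimension one there is no regular element modulo the $\m$-primary ideal $\C$, so the statement is vacuous there. Write $\SS = \RR/(x)$, let $\D = (\C,x)/(x)$ be the image of $\C$, and let $\overline{\C^{**}} = (\C^{**},x)/(x)$ be the image of $\C^{**}$. Since $\C^{**}/\C$ embeds in the Cohen--Macaulay module $\RR/\C$ (Theorem~\ref{genddeg1}), every associated prime of $\C^{**}/\C$ and of $\RR/\C$ has height one; moreover $\C^{**}$, being a bidual, is reflexive and hence satisfies $S_2$, so $\RR/\C^{**}$ satisfies $S_1$ and its associated primes have height one as well. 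Thus a general $x$ is a nonzerodivisor on each of $\RR/\C$, $\RR/\C^{**}$ and $\C^{**}/\C$ and superficial for all three, and the standard hyperplane-section formula for multiplicities gives $\deg(\RR/\C) = \deg(\SS/\D)$ and $\deg(\RR/\C^{**}) = \deg(\SS/\overline{\C^{**}})$.

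First I would reduce the inequality to a single containment. Using additivity of $\deg$ along $0 \to \overline{\C^{**}}/\D \to \SS/\D \to \SS/\overline{\C^{**}} \to 0$ together with the two equalities above, one obtains $\ddeg(\RR) = \deg(\C^{**}/\C) = \deg(\RR/\C) - \deg(\RR/\C^{**}) = \deg(\SS/\D) - \deg(\SS/\overline{\C^{**}}) = \deg(\overline{\C^{**}}/\D)$, while $\ddeg(\SS) = \deg(\D^{**}/\D)$ by Theorem~\ref{genddeg1} applied to $\SS$. Since $\D \subseteq \overline{\C^{**}}$ and $\D \subseteq \D^{**}$, it suffices to prove $\D^{**} \subseteq \overline{\C^{**}}$ as fractional ideals of $\SS$, because then $\D^{**}/\D$ is a submodule of $\overline{\C^{**}}/\D$ of the same dimension and degree is monotone. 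This can be checked locally: by the associativity formula both $\ddeg(\RR) = \sum_{\height \bar{\p}=1}\lambda\big((\overline{\C^{**}}/\D)_{\bar{\p}}\big)\deg(\SS/\bar{\p})$ and $\ddeg(\SS) = \sum_{\height \bar{\p}=1}\lambda\big((\D^{**}/\D)_{\bar{\p}}\big)\deg(\SS/\bar{\p})$ are the same weighted sum over the height-one primes $\bar{\p}$ of $\SS$, and biduals commute with localization, so it is enough to prove $(\D_{\bar{\p}})^{**} \subseteq (\C^{**})_{\p} \otimes \SS_{\bar{\p}}$ for each such $\bar{\p}$. As $\height \bar{\p}=1$ in $\SS$ means $\height \p = 2$ in $\RR$, this reduces us to the case $\dim \RR = 2$.

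The containment $\D^{**} \subseteq \overline{\C^{**}}$ would follow from reflexivity: applying $\Hom_{\SS}(-,\SS)$ twice to $\D \subseteq \overline{\C^{**}}$ gives $\D^{**} \subseteq (\overline{\C^{**}})^{**}$, so it is enough to show that $\overline{\C^{**}} \cong \C^{**}/x\C^{**}$ is \emph{reflexive} over $\SS$, for then $(\overline{\C^{**}})^{**} = \overline{\C^{**}}$. In other words, the conjecture comes down to a reflexivity-descent statement: the reflexive (indeed maximal Cohen--Macaulay) $\RR$-module $\C^{**}$ should remain reflexive after reduction modulo the generic regular element $x$. This is precisely where the equimultiplicity of $\C$ must be used: it provides a principal reduction of $\C$ and forces $\C^{**}/\C$ to be Cohen--Macaulay of pure codimension one, which fixes the relevant associated primes and lets one choose $x$ transverse to the locus where reflexivity could degenerate.

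The hard part will be exactly this reflexivity descent, and I expect it to be the only real obstacle. Reflexivity (equivalently $S_2$ together with reflexivity in codimension one) does \emph{not} descend modulo a regular element in general: a depth count shows that $S_2$ over $\RR$ guarantees only a strictly weaker condition over $\SS$ at primes of large dimension. The obstruction is cohomological; writing $\mathrm{Tr}$ for the Auslander transpose and using the change-of-rings isomorphism $\Ext^{i}_{\SS}(-,\SS) \cong \Ext^{i+1}_{\RR}(-,\RR)$ for the hypersurface section $\SS$, the failure of $\C^{**}/x\C^{**}$ to be reflexive is governed by the $x$-torsion of $\Ext^{\geq 3}_{\RR}(\mathrm{Tr}\,\C^{**},\RR)$, the vanishing of $\Ext^1$ and $\Ext^2$ being the reflexivity of $\C^{**}$ itself. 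Thus in the reduced situation $\dim \RR = 2$ I would try to show that equimultiplicity confines the support of these higher $\Ext$ modules to $\{\m\}$, so that a generic $x$ is a nonzerodivisor on them and the descent goes through. Reconciling the analytic-spread-one hypothesis on $\C$ with this cohomological vanishing is the crux of the argument and is where I would concentrate the technical effort.
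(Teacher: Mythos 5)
This statement is an open \emph{Conjecture} in the paper --- no proof is given there --- so the only question is whether your proposal settles it, and it does not. Your reductions are largely sound: for a sufficiently general $x$ (superficial for both $\RR/\C$ and $\RR/\C^{**}$ --- note this already strengthens the stated hypothesis, which only asks that $x$ be regular mod $\C$), the identity $\ddeg(\RR)=\deg(\overline{\C^{**}}/\D)$ together with $\D\subseteq\overline{\C^{**}}$ correctly reduces the conjecture to the containment $\D^{**}\subseteq\overline{\C^{**}}$, which in turn would follow from reflexivity of $\overline{\C^{**}}\cong\C^{**}/x\C^{**}$ over $\SS$. But that last step is not a technical loose end: it \emph{is} the conjecture, and you leave it unproven. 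Your own text concedes this, and your final paragraph is a heuristic about where to look rather than an argument: the identification of the obstruction with the $x$-torsion of $\Ext^{\geq 3}_{\RR}(\mathrm{Tr}\,\C^{**},\RR)$ is asserted, not derived (relating the $\SS$-transpose of $\C^{**}/x\C^{**}$ to the reduction of the $\RR$-transpose requires the presentation of $\C^{**}$ to specialize along $x$, which you do not check), and no mechanism is given by which equimultiplicity would force the needed vanishing. A side remark: the claim that $\C^{**}$ is maximal Cohen--Macaulay is unjustified for $d\geq 3$ (a reflexive module over a Cohen--Macaulay ring is only guaranteed depth $\min(2,d)$), though this is harmless after your reduction to $d=2$.

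The missing use of equimultiplicity is not cosmetic. If $\RR$ is a two-dimensional normal Cohen--Macaulay domain that is not Gorenstein (e.g.\ a higher Veronese subring of $k[x,y]$), then $\RR$ is Gorenstein in codimension one, so $\ddeg(\RR)=0$ by Theorem~\ref{genddeg1}, while a general hyperplane section is a one-dimensional non-Gorenstein ring with $\ddeg(\RR/(x))\geq 1$; of course $\C$ is not equimultiple there. So the conjectured inequality is false without the equimultiplicity hypothesis, and any proof of your reflexivity-descent statement must invoke that hypothesis in an essential, verifiable way. Your proposal mentions it only in passing (a principal reduction of $\C$, an unproved claim that $\C^{**}/\C$ is Cohen--Macaulay of pure codimension one) and never feeds it into the cohomological vanishing you need. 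As it stands, what you have is a useful reformulation of the conjecture, not a proof.
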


\section{Monomial subrings}

\noindent For given nonnegative integers $a_{1}, \ldots, a_{n}$, let  $H = < a_1, \ldots, a_n > = \{c_{1}a_{1} + \cdots + c_{n} a_{n} \mid c_{i} \in \ZZ_{\geq 0}  \}$ be the numerical semigroup generated by $a_{1}, \ldots, a_{n}$. We assume that $\gcd(a_1, \ldots, a_n) = 1$.   For a field $k$, we denote  the subring of $k[t]$
  generated by the monomials $t^{a_i}$ by  $k[H]$.   For reference we shall use \cite[Section 8.7]{VilaBook}.

\medskip

There are several integers playing roles in deriving properties of $k[H]$, with the emphasis on those that lead to the determination of
$\ddeg(k[H])$. There is an integer $s$ such that $t^n\in k[H]$ for all $n\geq s$. The smallest such $s$ is called the {\em conductor} of $H$ or of $k[H]$, which
we denote by $c$ and $t^c k[t]$ is the largest ideal of $k[t]$ contained in $k[H]$.
Then  $c-1$ is called the Frobenius number of $k[H]$ and expresses its multiplicity, $c-1=\deg(k[H])$.

\medskip

For any positive integer $a$ of $H$, the subring $\AA=k[t^a]$ is a Noether normalization of $k[H]$. This permits the passage of many
properties from $\AA$ to $k[H]$, and vice-versa. Then $k[H]$ is a free $\AA$-module and taking into account the natural graded structure
we can write
\[ k[H] = \bigoplus_{j=1}^m \AA t^{\alpha_j}.\]
Note that $s \leq \sum_{j=1}^m  \alpha_j$. 
An interesting question is to determine other invariants of $k[H]$ such as its canonical ideal $\C$ and the reduction number of $\C$ and the canonical degrees
$\cdeg(k[H])$ and $\ddeg(k[H])$.

\subsubsection*{Monomial curves}
Let $k$ be a field and $\RR = k[t^a, t^b, t^c]$ with $a<b< c$ and $\gcd(a,b,c)=1$. Assume $\RR$ is not Gorenstein.  
There exists a surjective  homomorphism $\Gamma: \SS= k[X, Y, Z] \rar \RR$ given by $\Gamma(X)=t^{a}, \Gamma(Y)=t^{b}, \Gamma(Z)=t^{c}$, where $\SS$ is a polynomial ring. 
Let $P= \ker(\Gamma)$.  Then there exists a matrix ${\ds \varphi = \left[ \begin{array}{ll}
X^{a_1} & Y^{b_2} \\
Y^{b_1} & Z^{c_2} \\
Z^{c_1} & X^{a_2}
\end{array}
\right] }$ such that $P=I_{2}(\varphi)$, the ideal generated by $2 \times 2$ minors of $\varphi$. 
That is, 
\[ P = ( f_{1}, f_{2}, f_{3} ) = (X^{a_{1}+a_{2}} - Y^{b_{2}}Z^{c_{1}}, \; Y^{b_{1}+b_{2}} - X^{a_{1}}Z^{c_{2}}, \; Z^{c_{1}+c_{2}} - X^{a_{2}}Y^{b_{1}} )  \]

\medskip

\noindent Let $x, y, z$ be the images of $X, Y, Z$ in $\RR \simeq \SS/P$ respectively.
Suppose $f_{1}, f_{2}$ form a regular sequence and let $L=(f_{1}, f_{2})$.  Then the canonical module of $\RR$ is
\[
 \omega_{\RR}  =   \Ext^{2}_{\SS} ( \RR, \SS)  \simeq   \Ext^{2}_{\SS} ( \SS/P, \SS) \simeq \Hom_{\SS/L}( \SS/P, \SS/L) \simeq (L: P)/ L = (L: f_{3})/L  \simeq (x^{a_{1}}, y^{b_{2}}). \]

\begin{Proposition} \label{preaGor}
Let $\RR= k[t^a, t^b, t^c]$ be a monomial ring. Let ${\ds \varphi = \left[ \begin{array}{ll}
X^{a_1} & Y^{b_2} \\
Y^{b_1} & Z^{c_2} \\
Z^{c_1} & X^{a_2}
\end{array}
\right]}  $ be the Herzog matrix such that $\RR \simeq k[X, Y, Z]/I_{2}(\varphi)$. 
Suppose that $a_1\leq a_2$, $b_{2} \leq b_{1}$, and $c_{1} \leq c_{2}$. Then ${\ds \ddeg(\RR)=  a_{1} b_{2} c_{1} }$. 
\end{Proposition}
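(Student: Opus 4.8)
The plan is to identify the trace ideal $\tr(\C)$ explicitly and then read off a colength. By Proposition~\ref{resisddeg1} we have $\ddeg(\RR)=\lambda(\RR/\tr(\C))$, so it suffices to prove that $\tr(\C)=(x^{a_1},y^{b_2},z^{c_1})$ and that this ideal has colength $a_1b_2c_1$. Throughout I use the three relations that $P=I_2(\varphi)$ imposes in $\RR$: the minors $f_1,f_2,f_3$ give $y^{b_2}z^{c_1}=x^{a_1+a_2}$, $x^{a_1}z^{c_2}=y^{b_1+b_2}$ and $x^{a_2}y^{b_1}=z^{c_1+c_2}$.

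First I would prove $(x^{a_1},y^{b_2},z^{c_1})\subseteq\tr(\C)$. Multiplying $\C=(x^{a_1},y^{b_2})$ by $z^{c_1}$ and using $y^{b_2}z^{c_1}=x^{a_1}x^{a_2}$ gives $z^{c_1}\C=x^{a_1}(x^{a_2},z^{c_1})$, so $\C\cong(x^{a_2},z^{c_1})$; multiplying by $y^{b_1}$ and using $y^{b_1+b_2}=x^{a_1}z^{c_2}$ gives $\C\cong(y^{b_1},z^{c_2})$. The trace ideal is unchanged when $\C$ is replaced by an isomorphic fractional ideal, and the inclusion map shows $I\subseteq\tr(I)$ for any representative $I$; hence $x^{a_1},y^{b_2},x^{a_2},z^{c_1},y^{b_1},z^{c_2}\in\tr(\C)$. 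The hypotheses $a_1\le a_2$, $b_2\le b_1$, $c_1\le c_2$ make the smaller exponents divide the larger, so these six elements generate exactly $J:=(x^{a_1},y^{b_2},z^{c_1})$.

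Next I would compute $\lambda(\RR/J)$. The same hypotheses give $P\subseteq(X^{a_1},Y^{b_2},Z^{c_1})$ in $\SS$: every monomial of $f_1,f_2,f_3$ is divisible by one of $X^{a_1},Y^{b_2},Z^{c_1}$, the only nonobvious case being the term $X^{a_2}Y^{b_1}$ of $f_3$, which uses $a_2\ge a_1$. Therefore $\RR/J\cong\SS/(X^{a_1},Y^{b_2},Z^{c_1})$, whose basis consists of the monomials $X^iY^jZ^l$ with $0\le i<a_1$, $0\le j<b_2$, $0\le l<c_1$; in particular $\lambda(\RR/J)=a_1b_2c_1$. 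Together with $J\subseteq\tr(\C)$ this gives $\ddeg(\RR)=\lambda(\RR/\tr(\C))\le a_1b_2c_1$, with equality exactly when $\tr(\C)=J$.

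The reverse inclusion $\tr(\C)\subseteq J$ is the heart of the matter and the step I expect to be the main obstacle. Since $\tr(\C)$ is a monomial ideal containing $J$, it is equivalent to show that none of the $a_1b_2c_1$ standard monomials $t^{h_0}$, with $h_0=ia+jb+lc$ and $0\le i<a_1$, $0\le j<b_2$, $0\le l<c_1$, lies in $\tr(\C)$. Computing $\C^{*}$ as the monomial fractional ideal $x^{-a_1}\big((x^{a_1}):_{\RR}\C\big)$ via Proposition~\ref{bidual}, and setting $\delta=bb_2-aa_1$, one finds that $t^{h_0}\in\C\,\C^{*}$ if and only if $h_0+\delta\in H$ or $h_0-\delta\in H$; so the theorem reduces to showing that $h_0\pm\delta$ are gaps of $H$ for every such $h_0$. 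I would prove this in the rank-two lattice $\Lambda$ of relations of $(a,b,c)$, generated by $(a_1+a_2,-b_2,-c_1)$ and $(-a_1,b_1+b_2,-c_2)$. Using the three equivalent forms $\delta=bb_2-aa_1=aa_2-cc_1=cc_2-bb_1$, the number $h_0+\delta$ is represented by the two vectors $(i-a_1,\,j+b_2,\,l)$ and $(i,\,j-b_1,\,l+c_2)$, which lie in one $\Lambda$-coset and each have a single negative entry, and likewise $h_0-\delta$ by $(i+a_1,\,j-b_2,\,l)$ and $(i-a_2,\,j,\,l+c_1)$. The task is to check that no vector of either coset is nonnegative; here the strict inequalities $i<a_1\le a_2$, $j<b_2\le b_1$ and $l<c_1\le c_2$ are precisely what force every representative out of the nonnegative octant, so that $h_0\pm\delta\notin H$. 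Granting this, $\tr(\C)=J$ and $\ddeg(\RR)=a_1b_2c_1$.
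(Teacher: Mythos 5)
Your strategy is genuinely different from the paper's: you work with the trace ideal and invoke Proposition~\ref{resisddeg1} to write $\ddeg(\RR)=\lambda(\RR/\tr(\C))$, whereas the paper computes $\C^{**}=(x^{a_1}):((x^{a_1}):\C)$ directly and obtains $\ddeg(\RR)=\lambda(\RR/\C)-\lambda(\RR/\C^{**})=a_1b_2(c_1+c_2)-a_1b_2c_2$. Your first two paragraphs are correct and rather clean: the three isomorphic monomial representatives $(x^{a_1},y^{b_2})\simeq(x^{a_2},z^{c_1})\simeq(y^{b_1},z^{c_2})$ of $\C$ give $J=(x^{a_1},y^{b_2},z^{c_1})\subseteq\tr(\C)$ at once, and $P\subseteq(X^{a_1},Y^{b_2},Z^{c_1})$ gives $\lambda(\RR/J)=a_1b_2c_1$, hence the inequality $\ddeg(\RR)\le a_1b_2c_1$.

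The gap is the reverse inclusion $\tr(\C)\subseteq J$, which you correctly flag as the heart of the matter but do not prove. You reduce it to the assertion that, for every $(i,j,l)$ in the box, no vector of the $\Lambda$-coset of $(i-a_1,j+b_2,l)$ (resp.\ of $(i+a_1,j-b_2,l)$) is nonnegative, and then write ``granting this.'' Exhibiting two representatives of each coset, each with one negative entry, establishes nothing: you must rule out nonnegativity of $(i-a_1,j+b_2,l)+mv_1+nv_2$ for \emph{all} $(m,n)\in\ZZ^2$, where $v_1=(a_1+a_2,-b_2,-c_1)$ and $v_2=(-a_1,b_1+b_2,-c_2)$, and that sign analysis is precisely the remaining content of the proposition; it is not ``forced'' by $i<a_1$, $j<b_2$, $l<c_1$ without an actual case argument. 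A much shorter way to close the gap, closer in spirit to the paper, is to establish the single colon identity $(x^{a_1}):_{\RR}\C=(x^{a_1}):_{\RR}y^{b_2}=(x^{a_1},y^{b_1},z^{c_1})$ (the identity on which the paper's own proof rests); then
\[ \tr(\C)=x^{-a_1}\,\C\cdot\bigl((x^{a_1}):_{\RR}\C\bigr)=(x^{a_1},y^{b_1},z^{c_1},y^{b_2},x^{a_2},z^{c_2})=J \]
follows immediately from the relations $y^{b_1+b_2}=x^{a_1}z^{c_2}$ and $y^{b_2}z^{c_1}=x^{a_1+a_2}$, with no coset analysis needed.
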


\begin{proof} Let   $x, y, z$ be the images of $X, Y, Z$ in $\RR$ respectively. Then we have
\[ x^{a_{1}+a_{2}} = y^{b_{2}}z^{c_{1}}, \quad  y^{b_{1}+b_{2}} = x^{a_{1}}z^{c_{2}}, \quad z^{c_{1}+c_{2}} = x^{a_{2}} y^{b_{1}}. \] 
The ideal  ${\ds \C = (x^{a_{1}}, y^{b_{2}})}$ is the canonical ideal  of $\RR$ and  ${\ds (x^{a_{1}}) : \C = ( x^{a_{1}}, y^{b_{1}}, z^{c_{1}})}$.
Then by Proposition~\ref{bidual},  we have 
\[ \C^{**} = (x^{a_{1}}) : ( (x^{a_{1}}) : \C ) =  (x^{a_{1}}) : ( x^{a_{1}}, y^{b_{1}}, z^{c_{1}}). \] 
Since  $a_1\leq a_2$, $b_{2} \leq b_{1}$, and $c_{1} \leq c_{2}$, we have 
\[ \begin{array}{ll}
{\ds y^{b_{2}} y^{b_{1}} = x^{a_{1}}z^{c_{2}} \in (x^{a_{1}}),} \quad & {\ds  y^{b_{2}}z^{c_{1}} = x^{a_{1}+a_{2}}  \in (x^{a_{1}}), } \vspace{0.1 in} \\
{\ds z^{c_{2}} y^{b_{1}} = z^{c_{1}}y^{b_{2}}z^{c_{2}-c_{1}}y^{b_{1}-b_{2}}   =   x^{a_{1}+ a_{2}} z^{c_{2}-c_{1}}y^{b_{1}-b_{2}}  \in (x^{a_{1}}),} & {\ds  z^{c_{2}}z^{c_{1}} = x^{a_{2}}y^{b_{1}} \in (x^{a_{1}}), } 
\end{array}  \] 
Therefore, ${\ds \C^{**} = (x^{a_{1}}, y^{b_{2}}, z^{c_{2}})}$. Moreover, since $z^{c_{1}+c_{2}} =x^{a_{2}} y^{b_{1}} \in (x^{a_{1}})$, we have   $\C =   (x^{a_{1}}, y^{b_{2}}) = (x^{a_{1}}, y^{b_{2}}, z^{c_{1}+c_{2}}) $. Therefore, 
\[ \begin{array}{rcl}
{\ds   \ddeg(\RR) }  &=& {\ds   \lambda(\RR/\C) - \lambda(\RR/\C^{**}) =   \lambda( \RR/( x^{a_{1}}, y^{b_{2}}, z^{c_{1}+c_{2}}) ) - \lambda( \RR/(x^{a_{1}}, y^{b_{2}}, z^{c_{2}}) ) } \vspace{0.1 in} \\  &= & {\ds a_{1}b_{2}(c_{1}+c_{2}) - a_{1}b_{2}c_{2} =  {a_1} {b_2}   {c_1}. } 
\end{array}   \]
\end{proof}

\begin{Remark}
{\rm For the monomial algebra $\RR=k[t^a, t^b, t^c]$,  the value of $\ddeg (\RR) $ is also
calculated in \cite[Proposition 7.9]{Herzog16}, in accordance with Proposition~\ref{resisddeg1}. 
 According to \cite[Theorem 4.1]{GMP11},  $\cdeg(\RR)$ is either $a_1 b_1 c_1$ or $a_2 b_2 c_2$, which supports  the  Comparison Conjecture~\ref{cdegvsfddeg}.
}\end{Remark}

\section{Rees algebras}\label{Reesalgs}

\noindent Let $\RR$ be a Cohen-Macaulay local ring and $I$ an ideal such that the Rees algebra $S = \RR[I t]$ is Cohen-Macaulay.  We consider
a few classes of such algebras. We denote by $\C$ a canonical ideal of $\RR$. 

\subsubsection*{Rees algebras with expected canonical modules}  This means that the canonical module of $S$ is  $\omega_S = \omega_{\RR}(1,t)^m$, for some $m \geq -1$  (See \cite{HSV87} for details). This will be the case when the associated graded ring  $\gr_I(\RR)$ is Gorenstein (\cite[Theorem 2.4,  Corollary 2.5]{HSV87}). We  assume that  $I$ is an ideal of codimension at least $2$.
We first consider the case $\C = \RR$. Set $\D = (1,t)^m$, pick $a$ such that $a$ is a regular element in $I$ and its initial form $\overline{a}$ is regular on $\gr_I(\RR)$,
and finally  replace $\D$ by $a^m\D\subset S$.

\begin{Proposition}
Let $\RR$ be a Gorenstein local ring, $I$ an ideal of codimension at least two and $S=\RR[It]$ its Rees algebra. If the canonical module of $S$ has the expected
form,  then $S$ is Gorenstein in codimension less than $\codim (I)$; in particular $\ddeg(S) = 0$ and $\cdeg(S)=0$.
\end{Proposition}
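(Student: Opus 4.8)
The plan is to compute the non-Gorenstein locus of $S$ directly from the shape of its canonical module and to show that this locus has codimension at least $\codim(I)$. Since $\RR$ is Gorenstein we have $\omega_{\RR}=\RR$, so the hypothesis gives $\omega_S\simeq (1,t)^m$ for some $m\geq -1$, which I regard as a rank-one fractional ideal of $S$ inside its total ring of fractions (here $t=at/a$ for a regular element $a\in I$, which exists because $\grade I=\codim I\geq 2$). As $S$ is Cohen-Macaulay, for a prime $Q$ of $S$ the local ring $S_Q$ is Gorenstein if and only if $(\omega_S)_Q$ is free, i.e. if and only if $((1,t)^m)_Q$ is invertible; and over the local ring $S_Q$ a power of a fractional ideal is invertible exactly when the ideal itself is (for $m\neq 0$). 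So everything reduces to locating the primes $Q$ at which $(1,t)_Q$ fails to be invertible. If $m=0$ then $\omega_S\simeq S$ and $S$ is Gorenstein outright, so I may assume $m\neq 0$.

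First I would record two elementary membership facts in the graded ring $S=\bigoplus_{n\geq 0} I^n t^n$. Writing $S_{+}=\bigoplus_{n\geq 1}I^nt^n$ for the irrelevant ideal, one checks directly that $u\,t\in S$ for every $u\in IS$ and that $u\,t^{-1}\in S$ for every $u\in S_{+}$ (both are immediate from $I^{n+1}\subseteq I^n$). Consequently, if $IS\not\subseteq Q$ then $t\in S_Q$ and $(1,t)_Q=S_Q$, while if $S_{+}\not\subseteq Q$ then $t^{-1}\in S_Q$ and $(1,t)_Q=tS_Q$; in either case $(1,t)_Q$ is principal, hence invertible, and $S_Q$ is Gorenstein. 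Therefore the non-Gorenstein locus of $S$ is contained in $V(IS+S_{+})$. The point worth emphasizing is that only the two \emph{easy} exhibitions of invertibility are needed, so no delicate analysis of $(1,t)$ as a non-principal ideal (and no domain hypothesis on $S$) enters.

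It then remains to bound the codimension of $V(IS+S_{+})$. Here I would compute the quotient degreewise: $IS+S_{+}$ has degree-zero part $I$ and degree-$n$ part $I^{n+1}+I^n=I^n$ for $n\geq 1$, so that $S/(IS+S_{+})\simeq \RR/I$, concentrated in degree zero. Hence $\dim S/(IS+S_{+})=\dim \RR/I=\dim\RR-\codim(I)=d-g$, where $g=\codim(I)$, and since $S$ is Cohen-Macaulay of dimension $d+1$ this gives $\height(IS+S_{+})=(d+1)-(d-g)=g+1$. Thus every prime of $S$ of height at most $g$ lies off the non-Gorenstein locus, so $S$ is Gorenstein in codimension $<\codim(I)$ (indeed in codimension $\leq \codim(I)$).

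Finally, because $\codim(I)\geq 2$, the ring $S$ is in particular Gorenstein in codimension $1$, so Theorem~\ref{genddeg1} yields $\ddeg(S)=0$; the same codimension-one Gorenstein property makes the height-one summands defining $\cdeg(S)$ vanish, giving $\cdeg(S)=0$. The main obstacle I anticipate is pinning down the non-Gorenstein locus cleanly—specifically, recognizing that invertibility of $(1,t)_Q$ is governed precisely by the two ideals $IS$ and $S_{+}$, so that the only honest computation is the single isomorphism $S/(IS+S_{+})\simeq \RR/I$; once that is in hand the codimension bound, and hence the vanishing of both degrees, is automatic.
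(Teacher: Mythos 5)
Your argument is correct, and it lands on the same key locus as the paper's proof, namely $V(IS+S_{+})=V((I,It)S)$, but it gets there by a genuinely more self-contained route. The paper quotes the computation $S:(1,t)^m=I^mS$ from Herzog--Simis--Vasconcelos and argues through the trace ideal of $\omega_S$, which contains $(I^m,I^mt)$ and is therefore the unit ideal off $V((I,It))$, forcing $\omega_S$ to be locally free there; the height count is compressed into the single assertion $\codim(I,It)=\codim(I)+1$. You instead verify directly that $(1,t)$ becomes principal at any prime $Q$ not containing $IS$ (where $t\in S_Q$) or not containing $S_{+}$ (where $t^{-1}\in S_Q$), which avoids both the external citation and the trace formalism, and you justify the height count explicitly via $S/(IS+S_{+})\simeq \RR/I$ together with the Cohen--Macaulayness of $S$. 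Both routes actually give the slightly stronger conclusion that $S$ is Gorenstein in codimension $\leq\codim(I)$, and both then deduce $\ddeg(S)=0$ from Theorem~\ref{genddeg1} and $\cdeg(S)=0$ from the codimension-one Gorenstein property. One cosmetic point: you claim a power of a fractional ideal over $S_Q$ is invertible ``exactly when'' the ideal itself is, but only the easy implication (invertibility of $(1,t)_Q$ passes to $((1,t)_Q)^m$, including the inverse when $m=-1$) is used, so nothing is at stake there.
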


\begin{proof}
  It is a calculation in \cite[p.294]{HSV87} that $ S: (1,t)^m = I^m S $. It follows that
\[  (I^m, I^m t) \subset I^m S \cdot (1,t)^m. \]
Since $\codim (I, It) = \codim (I) + 1$, the assertion follows. This implies
 that $\omega_S$ is free in codimension $1$ and therefore it is reflexive by a standard argument. 
 Finally by Theorem~\ref{genddeg1}, $\ddeg( S) = 0$,
 as   $S$
 is Gorenstein in codimension $1$.
  Therefore $\cdeg(S)=0$ by \cite[Corollary 2.4]{blue1}.
\end{proof}

\subsubsection*{Rees  algebras of minimal multiplicity} Let $(\RR, \m)$ be a  Cohen-Macaulay local ring, $I$ an $\m$-primary ideal,  and $J$ a minimal reduction of $I$ with $I^2 = JI$. 
From the exact sequence
 \[  0\rar I\RR[Jt] \rar \RR[J] \rar \RR[Jt]/I\RR[Jt] = \gr_J(\RR) \otimes_{\RR} \RR/I 
 \rar 0,\] 
  with the middle and right terms being Cohen-Macaulay, we have that $I\RR[Jt] = I \RR[It]$ is  Cohen-Macaulay. 
With $\gr_I(\RR)$ Cohen-Macaulay and $I$ an ideal of reduction number at most $1$,  
we have  if $\dim \RR > 1$ then $\RR[I t]$ is Cohen-Macaulay (see \cite[Section 3.2.1]{intclos}). 
A source of these ideals arises from irreducible ideals in Gorenstein rings such as  $I= J:\m$, where  $J$ is a parameter ideal.

\medskip

Let $S_0 = \RR[Jt]$ and $S = \RR + I t S_0$.
If $\RR$ is Gorenstein, then the canonical module of $S_{0}$ is  $\omega_{S_0} = (1,t)^{d-2}S_0$ and by the change of rings formula, we have
\[ \omega_{S} = \Hom_{S_0}( S, (1,t)^{d-2}S_0)= (1,t)^{d-2}S_0: S.\]

If $d=2$, then the canonical module $\omega_S$ is the conductor of $S$ relative to $S_0$. In the case
$ \m S_0$ is a prime ideal of $S_0$, so the conductor could not be larger as it would have grade at least two and then
$S = S_0$:  $\omega_S = \m S_0$.
If $d > 2$, then  $\m (1,t)^{d-2}S_0$ will
 work.

\medskip

Note that $S_0$ is Gorenstein in codimension $1$. If $P$ is such prime and   $P \cap \RR=\q\neq \m$,
then $(S_0)_\q = S_\q$, so $S_P=(S_0)_P$ is Gorenstein in codimension $1$. Thus we may assume
  $P\cap \RR = \m $, so that $P = \m S_0$.
   For $S_P$ to be Gorenstein  would mean $\Hom(S, S_0) \simeq S$ at $P$, that is ${\ds (S_0:  S)_P }$ is a principal 
   ideal of $S_P$ (see next Example). From \cite[Theorem 3.3.7]{BH},
 $\D = \Hom(S, S_0) = \m S_0=\m S$.   Thus $\deg(S/\D) = 2$ and since $\D^{**} \neq \D$, $\deg(S/\D^{**}) = 1$.
It follows that $\ddeg(S) = 1$.

\begin{Example}{\rm  Let $\RR = k[x,y]$,  and $I = (x^3, x^2 y^2, y^3)$. Then for the reduction $Q = (x^3,y^3)$, we have $I^2 = QI$. The Rees
algebra $S = \RR[It]= k[x,y,u,v,w]/L$, where 
$L=(x^2 u-xv, y^2w-y v, v^2-xyuw)$ is given by the $2\times 2$ minors of
${\ds 
\varphi = \left[ \begin{array}{ll}
v & xw \\
yu & v \\
x & y
\end{array}
\right],
}$
whose content is $(x,y,v)$. It follows that $S$ is not Gorenstein in codimension $1$ as it would require a content of codimension at least four.  
Indeed, setting $\AA = k[x,y, u, v, w]$, a projective resolution of $S$ over $\AA$ is defined by
the mapping $\varphi:  \AA^2 \rar \AA^3$ which dualizing gives
\[ \C=\Ext^2_{\AA}(\AA/L, \AA) = \Ext^1_{\AA}(L, \AA)= \coker(\varphi^{*}).\]
It follows that $\C$ is minimally generated by two elements
at the localizations of $\AA$ that contain $(x,y,v)$.
}\end{Example}

\subsubsection*{Rees algebras of ideals with the expected defining relations} Let $I$ be an ideal of the Cohen-Macaulay local ring $(\RR,\m)$ (or a polynomial ring $\RR= k[t_1, \ldots, t_d]$ over the field
$k$)
with a presentation
\[ \RR^m \stackrel{\varphi}{\lar} \RR^n \lar I =(b_1, \ldots, b_n)\rar 0.\]  Assume that $\codim (I) \geq 1$ and that the entries of $\varphi$ lie in $\m$. 
Denote by $\LL$ its ideal of relations
\begin{eqnarray*}
 0 \rar \LL \lar \SS = \RR[\TT_1, \ldots, \TT_n] \stackrel{\psi}{\lar}
\RR[It] \rar 0,  \quad \TT_i \mapsto b_it .
\end{eqnarray*} Then $\LL$ is a graded ideal of $\SS$ and its component in degree $1$ is generated by the $m$ linear forms
\[ \ff= [\ff_1, \ldots, \ff_m] = [\TT_1, \ldots , \TT_n] \cdot \varphi.\]
\noindent  Let
$\mathbf{f}= \{\ff_1, \ldots, \ff_s\} $ be a set of polynomials in
 $\LL\subset \SS=\RR[\TT_1, \ldots , \TT_n]$ and let $\mathbf{a}= \{a_1, \ldots,
a_s\}\subset \RR $. If  $\ff_i \in (\mathbf{a}) \SS$ for all $i$, we can
write
\[ \mathbf{f}= [\ff_1, \ldots, \ff_s] = [a_1, \ldots, a_q] \cdot\AA =
\mathbf{a}\cdot \AA,\] where $\AA $ is an $s\times q$ matrix with entries
in $\SS$. We call $(\aa)$ a $\RR$-content of $\mathbf{f}$.
 Since  $\aa\not \subset \LL$, then the $s \times s$ minors of $\AA$ lie in $\LL$.
   By an abuse of terminology\index{Sylvester form}, we refer
   to such determinants
    as
{\em
Sylvester forms}, or the {\em Jacobian
duals}\index{Jacobian dual},  of $\mathbf{f}$ relative to $\mathbf{a}$.
If
 $\aa = I_1(\varphi)$, we write $\AA = \BB(\varphi)$, and call it the Jacobian dual of  $\varphi$. Note that if $\varphi$ is a matrix with linear entries
 in the variables $x_1, \ldots, x_d$, then
 $\BB(\varphi)$ is a matrix with linear entries in the variables $\TT_1, \ldots, \TT_n$.

\begin{Definition}\label{expectrel} {\rm Let $I = (b_1, \ldots, b_n)$ be an ideal with a presentation as above and let = $\aa  (a_1, \ldots, a_s) = I_1(\varphi)$. The Rees algebra
$\RR[It]$ has the {\em expected relations} if
\[ \LL =  ( \TT \cdot \varphi, I_s(\BB(\varphi))).\]
}\end{Definition}

There will be numerous restrictions to ensure that $\RR[It]$ is a Cohen-Macaulay ring and that it is amenable to the determination of its canonical degrees. We consider
some special cases grounded on \cite[Theorem 1.3]{MU} and \cite[Theorem 2.7]{UlrichU96}.

\begin{Theorem}\label{rees1} Let $\RR = k[x_1, \ldots,   x_d]$ be a polynomial ring over an infinite field and
let $I$ be a perfect $\RR$-ideal of grade $2$ with a linear presentation matrix $\varphi$. 
 Assume that
 $\nu(I) > d$
 and that $I$ satisfies  $G_d$ $($meaning $\nu(I_{\pp})\leq \dim \RR_{\pp}$ on the punctured spectrum$)$.
  Then $\ell(I) = d$,  $r(I) = \ell(I)-1$, $\RR[It]$ is Cohen-Macaulay, and $\LL = (\TT\cdot \varphi, I_d(\BB(\varphi))).$
 \end{Theorem}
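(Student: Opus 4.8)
Since $I$ is perfect of grade $2$, the Hilbert--Burch theorem forces the presentation $\RR^m \xrightarrow{\varphi} \RR^n \to I \to 0$ to have $m = n-1$, with $\varphi$ an $n \times (n-1)$ matrix whose entries are linear in $x_1, \ldots, x_d$ and $I = I_{n-1}(\varphi)$ up to a unit. My strategy is to treat the four conclusions in order: first compute the analytic spread $\ell(I)$ through the special fiber ring, then deduce the reduction number and the Cohen--Macaulayness of $\RR[It]$, and finally pin down the defining ideal $\LL$ by means of the Jacobian dual. As the theorem is advertised as an application of \cite[Theorem 1.3]{MU} and \cite[Theorem 2.7]{UlrichU96}, a large part of the work is checking that the present hypotheses ($\nu(I) > d$ and $G_d$) match those results; I will nonetheless indicate the underlying mechanisms.

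\textbf{Analytic spread.} The special fiber ring is $F(I) = \RR[It] \otimes_{\RR} k$, and $\ell(I) = \dim F(I)$. Writing $\TT \cdot \varphi = \xx \cdot \BB(\varphi)$ with $\xx = [x_1, \ldots, x_d]$ and $\BB(\varphi)$ a $d \times (n-1)$ matrix of linear forms in $\TT_1, \ldots, \TT_n$, the entries of $\varphi$ lie in $\m$, so the linear relations $\TT\varphi$ die modulo $\m$ and $F(I) = k[\TT_1, \ldots, \TT_n]/I_d(\BB(\varphi))$. The maximal minors of a $d \times (n-1)$ matrix have generic codimension $(n-1)-d+1 = n-d$, and the $G_d$ hypothesis is designed precisely to force $I_d(\BB(\varphi))$ to attain this codimension, via the heights of the Fitting ideals of $\varphi$. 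Hence $\dim F(I) = n - (n-d) = d$, that is $\ell(I) = d$. Here $\nu(I) = n > d$ is exactly what gives $d \le n-1$, so that these minors are nontrivial.

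\textbf{Reduction number, Cohen--Macaulayness, and equations.} With $\ell(I) = d$ in hand, the hypotheses $\nu(I) > d$ and $G_d$, which by the previous step is $G_{\ell(I)}$, place $I$ in the regime of \cite[Theorem 2.7]{UlrichU96}, yielding that $\RR[It]$ is Cohen--Macaulay with reduction number $r(I) = \ell(I) - 1 = d-1$. For the defining ideal, one containment is elementary: since $\TT\varphi = \xx \cdot \BB(\varphi) \in \LL$ and $\RR[It]$ is a domain in which $x_1, \ldots, x_d$ is a regular sequence, a Cramer's rule argument forces every maximal minor of $\BB(\varphi)$ into $\LL$, so that $(\TT\varphi, I_d(\BB(\varphi))) \subseteq \LL$. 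For equality I would set $J := (\TT\varphi, I_d(\BB(\varphi)))$ and show that $\SS/J$ is Cohen--Macaulay of dimension $d+1 = \dim \RR[It]$; since $\SS/J$ surjects onto $\RR[It] = \SS/\LL$ and both are unmixed of the same dimension, the surjection is an isomorphism and $J = \LL$. This last part is the content of \cite[Theorem 1.3]{MU}.

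\textbf{The main obstacle.} The genuinely hard step is the equality $\LL = (\TT\varphi, I_d(\BB(\varphi)))$, equivalently the Cohen--Macaulayness and correct dimension of $\SS/J$. Establishing it requires the machinery behind the cited theorems: one analyzes $\RR[It]$ as a residual intersection and controls $\SS/J$ through approximation complexes, or Eagon--Northcott type complexes, built from $\varphi$ and its Jacobian dual $\BB(\varphi)$. The delicate point throughout is extracting, from the abstract $G_d$ condition on $I$, the concrete statement that $I_d(\BB(\varphi))$ has the expected codimension $n-d$ and defines a Cohen--Macaulay ideal; once that genericity is secured, the homological bookkeeping that simultaneously produces the Cohen--Macaulayness of $\RR[It]$ and the sufficiency of the expected equations goes through.
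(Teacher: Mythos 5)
Your proposal is essentially the paper's own treatment: the paper states Theorem~\ref{rees1} without proof, presenting it as a quotation of \cite[Theorem 1.3]{MU} and \cite[Theorem 2.7]{UlrichU96}, which are exactly the results you invoke for the analytic spread, reduction number, Cohen--Macaulayness, and the expected equations. Your surrounding sketch (Hilbert--Burch, the Jacobian dual, the Cramer's-rule containment $I_d(\BB(\varphi))\subseteq\LL$, and the dimension/unmixedness argument for equality) is an accurate account of the mechanism behind those citations, so there is nothing to reconcile.
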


 The canonical ideal of these rings is described in \cite[Theorem 2.7]{UlrichU96}.

 \begin{Proposition} 
 Let $I$ be an ideal of codimension two satisfying the assumptions of {\rm Theorem~\ref{rees1}}.     
 Then $\ddeg(\RR[It]) \neq 0$
and similarly $\cdeg(\RR[It])\neq 0$. 
 \end{Proposition}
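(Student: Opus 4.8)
```latex
\textbf{Proof proposal.} The plan is to show that $\RR[It]$ is \emph{not} Gorenstein in codimension one, so that the nonvanishing of both $\ddeg$ and $\cdeg$ follows at once from Theorem~\ref{genddeg1} (for $\ddeg$) and from \cite[Corollary 2.4]{blue1} (for $\cdeg$). In other words, the whole statement reduces to exhibiting a height-one prime $P$ of $S = \RR[It]$ at which $S_P$ fails to be Gorenstein. By Theorem~\ref{genddeg1} the associated primes of $S^{**}/S$-type defects live among the codimension-one primes of $S$, so it suffices to locate one bad prime and certify that the canonical module is not locally free there.

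First I would identify the relevant codimension-one prime. Since $I$ is a perfect ideal of grade $2$ with $\nu(I) > d$, it is far from being of linear type, and the defining ideal $\LL = (\TT\cdot\varphi,\, I_d(\BB(\varphi)))$ of Theorem~\ref{rees1} carries genuine higher-degree relations coming from the minors $I_d(\BB(\varphi))$. The natural candidate for the bad prime is the one cut out by the content of these relations — the analogue of the prime $(x,y,v)$ appearing in the Rees-algebra example above, where non-Gorensteinness in codimension one was detected precisely by the content having codimension smaller than the number required for a Gorenstein (complete-intersection-like) locus. Concretely, at the maximal irrelevant prime $\mathfrak{m}S$ (equivalently, after passing to the fiber ring, the special fiber $\mathcal{F}(I) = S/\mathfrak{m}S$), the condition $\nu(I) > d = \ell(I)$ forces the fiber cone to have embedding dimension strictly larger than its dimension, so $S$ cannot be a hypersurface — hence not Gorenstein — along this locus.

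Next I would invoke the explicit description of the canonical ideal of these rings from \cite[Theorem 2.7]{UlrichU96}, quoted just above the Proposition. The point is to read off, at the chosen prime $P$, whether the canonical module $\omega_S$ is locally free of rank one. Using Proposition~\ref{bidual} to compute the bidual $\C^{**}$ via $(a):((a):\C)$ for a suitable regular element, one checks that $\C_P^{**} \neq \C_P$, i.e. the minimal number of generators of $\omega_{S_P}$ exceeds one. This is exactly the failure of reflexivity-to-principality that, by Theorem~\ref{corecan}-(3), obstructs Gorensteinness at $P$, and it pins down $\ddeg(S) = \deg(\C^{**}/\C) > 0$.

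The main obstacle I anticipate is the bookkeeping needed to verify that the canonical module of \cite[Theorem 2.7]{UlrichU96} really fails to be locally free precisely in codimension one (rather than only in higher codimension, which would leave $\ddeg$ vanishing). This requires matching the structure of $\BB(\varphi)$ and its minors against the $G_d$ hypothesis: $G_d$ guarantees $\nu(I_\pp) \le \dim \RR_\pp$ on the punctured spectrum, so away from $\mathfrak{m}$ the ideal is well-behaved and the Rees algebra is Gorenstein there; the defect is therefore concentrated at $\mathfrak{m}S$, which must be confirmed to have height one in $S$. Establishing that $\height(\mathfrak{m}S) = 1$ — equivalently $\dim \mathcal{F}(I) = \ell(I) = d$ so that $\mathfrak{m}S$ is a minimal prime over the relations contributing the defect — is the delicate dimension count, but it follows from $\ell(I) = d$ already recorded in Theorem~\ref{rees1}.
```
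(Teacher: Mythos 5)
Your overall strategy (reduce to ``not Gorenstein in codimension one,'' then quote Theorem~\ref{genddeg1} and \cite[Corollary 2.4]{blue1}) is the right frame, and the second half of that reduction is exactly how the paper handles $\cdeg$. But the proposal has a genuine gap at its core, in two places. First, the argument you offer for non-Gorensteinness at $\m S$ is a non sequitur: from $\nu(I)>d=\ell(I)$ you conclude that the fiber cone has embedding dimension larger than its dimension, hence is not a hypersurface, ``hence not Gorenstein.'' Failing to be a hypersurface does not imply failing to be Gorenstein --- there are plenty of Gorenstein rings of arbitrarily large embedding codimension --- so this step proves nothing about the type of $S$ at $\m S$. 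Second, the sentence ``one checks that $\C_P^{**}\neq \C_P$'' is precisely the content of the proposition; you have deferred the entire proof to an unexecuted computation. As written, nothing in the proposal actually certifies that the canonical module fails to be principal at a height-one prime.

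For comparison, the paper's proof is exactly the computation you postponed, carried out globally and explicitly. Taking $J$ a minimal reduction of $I$, $K=J:I$, and $\C=Kt\,\RR[It]$ (the canonical ideal from \cite[Theorem 2.7]{UlrichU96}), one picks a regular element $a\in K$ and computes the graded components of $(a):\C$, obtaining $(a):\C=a\bigl(\RR+(I:K)t+(I^2:K)t^2+\cdots\bigr)$; here $\codim(K)\geq 2$ is what pins down the degree-zero piece. Then $\C^{**}=(a):\bigl((a):\C\bigr)$ has degree-zero component equal to all of $\RR$, whereas $\C=Kt\,\RR[It]$ has zero degree-zero component, so $\C\neq\C^{**}$; since $\C^{**}/\C$ is supported in codimension one by Theorem~\ref{genddeg1}, this gives $\ddeg(\RR[It])>0$, and $\cdeg(\RR[It])\neq 0$ follows from the common characterization of vanishing. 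If you want to salvage your localization-at-$\m S$ route, you would need to replace the hypersurface heuristic by an actual computation of $\nu(\omega_{S_P})$ or of $(\C^{**}/\C)_P$ at that prime --- which, in effect, is the paper's calculation again.
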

 
 \begin{proof}
Let $J$ be a minimal reduction of $I$ and set $K = J:I$. Then $ \C =K t  \RR[It]$ is a canonical ideal of $\RR[It]$ by \cite[Theorem 2.7]{UlrichU96}.
Let $a$ be a regular element of $K$. Then
 \[ (a): \C = \sum L_i t^i  \subset \RR[It], \quad \mbox{and} \quad bt^i \in L_i \;\;  \mbox{if and only if} \;\; b  KI^j \subset a I^{i+j}.\]
Thus for $b\in L_0$, $b\cdot K \subset (a)$ so $b= r\cdot a$ since $\codim (K) \geq 2$. Hence $L_0 = (a)$.
 For $b t\in L_1$, $b K \subset aI$, $b= ra$ with $r \in I:K$. As $rKI \subset I^2$, we have $L_1 = a(I:K)t$.
 For $i \geq 2$, $b\in L_i$, we have $b = rat^i$ with $r K \subset I^i$. Hence $L_i =a (I^i:K)t^i$.  In
 general we have $L_i = a(I^{i}: K)t^i$. Therefore, we have
 \[ \begin{array}{rcl}
 {\ds (a): \C} &=& {\ds  a(\RR+ (I:K) t + (I^2:K)t^2 +  \cdots ),} \vspace{0.1 in} \\
{\ds \C^{**} } &=& {\ds   (a) : ((a): \C) =  \sum_j L_jt^j = \RR[It] :   ( \RR + (I:K) t + (I^2:K)t^2 +  \cdots  ) }.
\end{array} \]
For $b\in L_0$ and $i\geq 1$,  $b(I^i:K) \subset I^i$ and thus $b\in \bigcap_i I^i : (I^i:K)=L_0$.  In general it is clear that $KI^i\subset L_{i}$. Note that
$L_0 = \RR: (\RR:K) = \RR$. It follows that $\C \neq \C^{**}$ and hence $\ddeg(\RR[It]) \neq 0$.
Recall that the vanishing of either of the functions $\cdeg$ or $\ddeg$ holds if and only if the ring is Gorenstein in codimension $1$. Therefore $\cdeg(\RR[It]) \neq 0$.
 \end{proof}

\section{Calculations in $\AA=\m:\m$}
\noindent
Let $(\RR, \m)$ be a Cohen Macaulay local ring of dimension $1$ and let $\AA=\m:\m$. 
The driving questions of this section
 are what are the type $r(\AA)$ of $\AA$, $\cdeg(\AA)$ and $\ddeg(\AA)$ in relation to
 the invariants of $\RR$.  We begin by collecting elementary data on $\AA$.

 \begin{Theorem}\label{NGens} Let $(\RR, \m)$ be a Cohen Macaulay local ring of dimension $1$ with a canonical ideal $\C$.   Let $Q$ the total ring of fractions of $\RR$ and $\AA  = \m:_ Q \m$.  Suppose that $\RR$ is not a DVR. 
 \begin{enumerate}[{\rm (1)}]
 \item We have  ${\ds \AA = x^{-1} ((x):_{\RR} \m )}$ for any regular element $x\in \m$.
 \item The minimal number of generators of $\AA$ is  $r+1$, where $r$ is the type of $\RR$.
 \item The ideal $\m \C$ is the canonical ideal of $\AA$. 
 \end{enumerate}
 \end{Theorem}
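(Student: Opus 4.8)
The plan is to handle the three assertions in sequence, reducing everything to colon computations inside $Q$, a single length count, and one application of Matlis duality over the Artinian quotient $\RR/(x)$. Throughout, recall that $\AA=\m:_Q\m=\Hom_\RR(\m,\m)$ is a finite $\RR$-module, torsion-free, hence a one-dimensional Cohen-Macaulay ring with $\RR\hookrightarrow\AA$ finite and injective. For (1) I would argue by double inclusion, using only that $\RR$ is not a DVR. If $q\in\AA$ then $q\m\subseteq\m$, so $xq\in q\m\subseteq\RR$ and $(xq)\m=x(q\m)\subseteq x\m\subseteq(x)$, whence $xq\in(x):_\RR\m$ and $q\in x^{-1}((x):_\RR\m)$. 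Conversely, if $xq\in(x):_\RR\m$ then $q\m\subseteq\RR$, so $q\m$ is an ideal of $\RR$; were $q\m=\RR$ then $\m$ would be invertible, hence principal and $\RR$ a DVR, against hypothesis, so $q\m\subseteq\m$ and $q\in\AA$. This yields $\AA=x^{-1}((x):_\RR\m)$ (and reconciles with Proposition~\ref{bidual}(1)).

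For (2), multiplication by $x$ gives an $\RR$-module isomorphism $\AA\cong M:=(x):_\RR\m$, so $\nu(\AA)=\nu(M)$. Since $x$ is a regular parameter, $M/(x)=\mbox{\rm Soc}(\RR/(x))\cong k^{r}$ with $k=\RR/\m$, so $\nu(M)\le r+1$, and it remains to see that the generator coming from $(x)$ is genuinely needed, i.e. $x\notin\m M$. The chain $x\m\subseteq\m M\subseteq(x)$ together with $(x)/x\m\cong k$ leaves only the possibilities $\m M=x\m$ or $\m M=(x)$. In the second case, using $M=x\AA$ from (1), the equality $\m M=(x)$ reads $x\,\m\AA=x\RR$, hence $\m\AA=\RR$; but $\AA\m\subseteq\m$ by definition of $\AA$, a contradiction. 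Therefore $\m M=x\m$, $x\notin\m M$, and $\nu(\AA)=\nu(M)=r+1$.

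For (3), the change-of-rings formula gives $\omega_\AA=\Hom_\RR(\AA,\C)=\C:_Q\AA$ as a fractional $\AA$-ideal. I would first check that $\m\C$ is an $\AA$-ideal, immediate from $\AA\m\subseteq\m$, and that $\m\C\subseteq\C:_Q\AA\subseteq\C$ (the left inclusion from $(\m\C)\AA=\C(\m\AA)\subseteq\C\m\subseteq\C$, the right from $1\in\AA$). Since $\C/\m\C\cong k^{r}$, the equality $\C:_Q\AA=\m\C$ amounts to showing that no element of $\C:_Q\AA$ survives in $\C/\m\C$. Using (1), $q\in\C:_Q\AA$ is equivalent to $q\cdot((x):_\RR\m)\subseteq x\C$. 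Reducing modulo $x$, set $A=\RR/(x)$ and $\bar\C=\C/x\C=\omega_A=\mathrm{E}_A(k)$; the condition becomes $\bar q\cdot\mbox{\rm Soc}(A)=0$, i.e. $\bar q\in(0:_{\bar\C}\mbox{\rm Soc}(A))$. The crux is the Matlis-duality identity $(0:_{\bar\C}\mbox{\rm Soc}(A))=\m\bar\C$: both sides have length $\lambda(A)-r$ (the right because $\bar\C/\m\bar\C\cong\mbox{\rm Soc}(A)^{\vee}$, the left because $(0:_{\bar\C}\mbox{\rm Soc}(A))\cong(A/\mbox{\rm Soc}(A))^{\vee}$), while the containment $\m\bar\C\subseteq(0:_{\bar\C}\mbox{\rm Soc}(A))$ is clear from $\m\cdot\mbox{\rm Soc}(A)=0$. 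Hence $\bar q\in\m\bar\C$, so $q\in\m\C+x\C=\m\C$, giving $\C:_Q\AA=\m\C$ and identifying $\m\C$ as the canonical ideal of $\AA$.

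The main obstacle is this last inclusion in (3): passing from ``$q$ annihilates the socle of $\RR/(x)$ inside $\bar\C$'' to ``$q\in\m\C$'' is exactly the Matlis-duality length computation, and the care needed lies in verifying $\bar\C=\mathrm{E}_A(k)$ and matching the two length counts at $\lambda(A)-r$. Parts (1) and (2), and the forward inclusions in (3), are routine colon bookkeeping by comparison.
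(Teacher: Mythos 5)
Your argument is correct. For parts (1) and (2) it essentially coincides with the paper's proof: the paper likewise reduces (1) to the non-invertibility of $\m$ (via $\AA=\RR:_Q\m$ and Proposition~\ref{bidual}), and in (2) it also computes $\nu(\AA)=\lambda(M/\m M)$ for $M=(x):_\RR\m$ by pinning down $\m M=x\m$; your way of excluding $\m M=(x)$ (it would force $\m\AA=\RR$, contradicting $\m\AA\subseteq\m$) is marginally cleaner than the paper's, which instead takes $x\in\m\setminus\m^2$ and derives the contradiction $x\in\m^2$. The genuine divergence is in (3). The paper stays over $\RR$: from $\RR:_Q\AA=\m$ and part (2) it gets $\AA/\RR\cong k^r$, applies $\Hom(\cdot\,,\C)$ to $0\to\RR\to\AA\to k^r\to 0$ to obtain $0\to\D\to\C\to\Ext^1(k^r,\C)\cong k^r\to 0$ (using that $\AA$ is maximal Cohen--Macaulay and $\Ext^1(k,\C)\cong k$), and then concludes $\D=\m\C$ from $\m\C\subseteq\D\subseteq\C$ together with $\lambda(\C/\D)=r=\lambda(\C/\m\C)$. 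You instead descend to the Artinian ring $A=\RR/(x)$, identify $\D$ as the preimage in $\C$ of $(0:_{\bar\C}\mbox{\rm Soc}(A))$, and prove $(0:_{\bar\C}\mbox{\rm Soc}(A))=\m\bar\C$ by a Matlis-duality length count. Both routes are valid and both ultimately rest on the same numerical fact $\lambda(\C/\D)=r=\lambda(\C/\m\C)$; the paper's version buys brevity (a single $\Ext$ sequence, no need to verify $\bar\C\cong \mathrm{E}_A(k)$), while yours is more explicit about exactly which elements of $\C$ lie in $\D$ and trades local duality in dimension one for the more elementary Matlis duality over an Artinian quotient.
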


\begin{proof} (1) Since $\RR$ is not a DVR, we have $\m (\RR:_{Q} \m) \neq \RR$. Hence
\[ \AA =  \m:_{Q} \m \subset \RR:_{Q} \m \subset \m:_{Q} \m. \]
Therefore, $\AA = \RR :_{Q} \m$ and by Proposition~\ref{bidual},  for any regular element $x \in \m$, we have
\[ \AA =  \RR:_{Q} \m= x^{-1} ((x):_{\RR}\m).\]
(2) Let $x \in \m \setminus \m^{2}$ be a regular element. Then the type of $\RR$ is
\[ r = \dim (\mbox{Soc}(\RR/\RR x ) ) = \lambda ( ( (x) :_{\RR} \m )/(\RR x) ). \]
Note that $\m x  \subset \m( (x):_{\RR} \m) \subset \RR x $ and $\lambda(\RR x / \m x )= 1$. 
If $ \m( (x):_{\RR} \m) = \RR x $, then $x \in  \m( (x):_{\RR} \m)  \subset \m^{2}$, which is a contradiction. Therefore $\m x  = \m( (x):_{\RR} \m) $ and the minimal number of generators of $\AA$ is 
\[ \begin{array}{rcl}
{\ds  \nu(\AA) } &=& {\ds  \nu(((x):_{\RR} \m ) ) = \lambda (  ((x):_{\RR} \m )/\m ((x):_{\RR} \m )   ) = \lambda (  ((x):_{\RR} \m )/\m x   ) } \vspace{0.1 in} \\
&=& {\ds   \lambda ( ( (x) :_{\RR} \m )/(\RR x) ) +  \lambda(\RR x / \m x ) = r+1. }
\end{array}   \]
(3) Let $k = \RR/\m$.  Since $\m$ is an ideal of both $\RR$ and $\AA$,  we have $\RR:\AA = \m$. Thus $\AA/\RR \simeq k^n$ for some $n$.
The exact sequence ${\ds 0 \rar \RR \rar \AA \rar k^n \rar 0}$ yields
\[ 0\rar k  \rar \AA/\m\AA \rar k^n \rar 0.\]
By (2), we have  $r+1=\nu(\AA) = n+1$ so that $n=r$. Now by applying $\Hom(\cdot, \C)$ to the exact sequence ${\ds 0 \rar \RR \rar \AA \rar k^r \rar 0}$, 
we get
\[ 0\rar \Hom(\AA, \C)  \rar \C \rar \Ext^{1}(k^{r}, \C) \simeq k^{r} \rar 0. \]
Let $\D= (\C :_{Q} \AA) \simeq \Hom(\AA, \C)$, which is the canonical module of $\AA$ (\cite[Theorem 3.3.7]{BH}).  Then $\C/\D \simeq k^r$ and $\m \C \subset \D $. 
Since $r$ is the type of $\RR$, we have $\C/ \m \C \simeq k^{r}$. Thus, $\D=\m C$.
\end{proof}

\begin{Remark}\label{semilocal}{\rm Let $\RR$ and $\AA$ be the same as in Theorem~\ref{NGens}. 
A relevant point is to know when $\AA$ is a local ring. Let us  briefly consider some cases.

\begin{enumerate}[(1)]

\item  Let $L$ be an ideal of the local ring $\RR$
and suppose
 $L = I \oplus J$, $L = I + J$, $I\cap J = 0$, is a non-trivial decomposition.
  Then $I \cdot (J: I) = 0$ and $J\cdot (J:I) = 0$ and thus if
 $\grade(I,J) = 1$ (maximum possible by the Abhyankar-Hartshorne Lemma), then $I:J=0$. It follows
 that
 \[ \Hom(L,L) = \Hom(I,I) \times \Hom(J,J),\] and therefore $\Hom(L,L)$ is not a local ring.

  \item   Suppose $\RR$ is complete, or at least Henselian. If $\AA$ is not a local ring, by
  the Krull-Schmidt Theorem,  $\AA$ admits a non-trivial decomposition of $\RR$-algebras
  ${\ds  \AA = \BB \times \CC}$. 
 Since $\m = \m \AA$, we have a decomposition $\m = \m\BB \oplus \m \CC$. If we preclude  such decompositions,  then
 $\AA$ is a local ring. In general, if the completion of $\RR$ is a domain, then a similar argument would
 apply. We refer to  \cite[Chapter 7]{Eisenbudbook} for a treatment of such rings.
\end{enumerate}
}\end{Remark}

Our main result is the following.

\begin{Theorem} \label{TCdeg}
Let $\RR$ and $\AA$ be  the same as  in {\rm Theorem~\ref{NGens}}. 
Suppose $(\AA, \M)$  is a local ring. Let  $e=[\AA/ \M:\RR/\m]$ and $r$ the type of $\RR$. Then  
\[ \cdeg(\AA) = e^{-1} ( \cdeg(\RR) + \rme_0(\m) - 2r ).\] 
\end{Theorem}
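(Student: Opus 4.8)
The plan is to compute $\cdeg(\AA)$ straight from its canonical ideal, which by Theorem~\ref{NGens}(3) is $\m\C$, and then push every length over $\AA$ down to a length over $\RR$ via the residue degree $e=[\AA/\M:\RR/\m]$. Since $\m\C$ is $\M$-primary and, the residue field being infinite, admits a principal minimal reduction $(c)$ with $c\in\m\C$ a regular element, one has
\[ \cdeg(\AA)=\lambda_{\AA}(\AA/(c))-\lambda_{\AA}(\AA/\m\C). \]
Because $\AA$ is module-finite over $\RR$ and $\AA/\M$ is a degree-$e$ field extension of $\RR/\m$, every finite-length $\AA$-module $M$ satisfies $\lambda_{\RR}(M)=e\,\lambda_{\AA}(M)$. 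Hence it is enough to evaluate
\[ e\cdot\cdeg(\AA)=\lambda_{\RR}(\AA/(c))-\lambda_{\RR}(\AA/\m\C) \]
entirely in terms of invariants of $\RR$.

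First I would compute the second term along the filtration $\m\C\subseteq\C\subseteq\RR\subseteq\AA$. From the proof of Theorem~\ref{NGens}(3) we have $\AA/\RR\cong(\RR/\m)^{r}$, so $\lambda_{\RR}(\AA/\RR)=r$; and since the canonical module is minimally generated by $r$ elements, $\lambda_{\RR}(\C/\m\C)=\nu(\C)=r$. Adding the middle factor $\lambda_{\RR}(\RR/\C)$ gives
\[ \lambda_{\RR}(\AA/\m\C)=2r+\lambda_{\RR}(\RR/\C). \]

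Next I would treat the first term, which equals $e\cdot\rme_0(\m\C;\AA)$ because $(c)$ reduces $\m\C$ in $\AA$. The key point is that $\AA/\RR$ has finite length, so $\AA_{\p}=\RR_{\p}$ at every minimal prime $\p$ of $\RR$; the associativity formula for multiplicities then shows that the multiplicity of $\m\C$ on $\AA$ regarded as an $\RR$-module agrees with $\rme_0(\m\C;\RR)$, and comparing Hilbert functions through the identity $\lambda_{\RR}=e\,\lambda_{\AA}$ turns this into $e\cdot\rme_0(\m\C;\AA)=\rme_0(\m\C;\RR)$ (note $\m\C$ is already an $\AA$-ideal since $\AA\m=\m$). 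In dimension one multiplicity is additive on products of $\m$-primary ideals: taking principal reductions $(a)$ of $\m$ and $(b)$ of $\C$, the product $(ab)$ reduces $\m\C$ and $\lambda_{\RR}(\RR/(ab))=\lambda_{\RR}(\RR/(a))+\lambda_{\RR}(\RR/(b))$, whence $\rme_0(\m\C;\RR)=\rme_0(\m)+\rme_0(\C)$. Therefore $\lambda_{\RR}(\AA/(c))=\rme_0(\m)+\rme_0(\C)$.

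Subtracting the two displays and recalling the dimension-one formula $\cdeg(\RR)=\rme_0(\C)-\lambda_{\RR}(\RR/\C)$ from the Remark yields
\[ e\cdot\cdeg(\AA)=\bigl(\rme_0(\m)+\rme_0(\C)\bigr)-\bigl(2r+\lambda_{\RR}(\RR/\C)\bigr)=\cdeg(\RR)+\rme_0(\m)-2r, \]
which is the asserted identity after dividing by $e$. The step I expect to be the main obstacle is the multiplicity transfer $e\cdot\rme_0(\m\C;\AA)=\rme_0(\m\C;\RR)$: since $\RR$ need not be a domain, one cannot simply invoke rank-one birationality over a field, and must instead argue carefully that the finiteness of $\AA/\RR$ forces $\AA_{\p}=\RR_{\p}$ at all minimal primes and that the residue-degree bookkeeping $\lambda_{\RR}=e\,\lambda_{\AA}$ survives the limit in the Hilbert–Samuel function, so that the two multiplicities differ by exactly the factor $e$.
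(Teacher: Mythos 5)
Your proposal is correct and follows the same strategy as the paper: identify $\m\C$ as the canonical ideal of $\AA$ via Theorem~\ref{NGens}, convert $\AA$-lengths to $\RR$-lengths through the factor $e$, and compute $\lambda_{\RR}(\AA/\m\C)=2r+\lambda_{\RR}(\RR/\C)$ by the filtration $\m\C\subseteq\C\subseteq\RR\subseteq\AA$. The only divergence is in the first term: the paper simply takes the reduction $(xc)$ of $\m\C$ with $x,c\in\RR$ minimal reductions of $\m$ and $\C$, so that $\lambda_{\RR}(\AA/xc\AA)=\lambda_{\RR}(\RR/x\RR)+\lambda_{\RR}(\RR/c\RR)=\rme_0(\m)+\rme_0(\C)$ follows at once from the snake-lemma identity $\lambda_{\RR}(\AA/x\AA)=\lambda_{\RR}(\RR/x\RR)$ for the finite birational extension $\RR\subseteq\AA$, whereas you route through an abstract principal reduction in $\AA$, the transfer $e\cdot\rme_0(\m\C;\AA)=\rme_0(\m\C;\RR)$, and additivity of $\rme_0$ on products --- all valid, but the ``main obstacle'' you flag dissolves once the reduction is chosen inside $\RR$.
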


\begin{proof} By Theorem~\ref{NGens}, the ideal $\m \C$ is the canonical ideal of $\AA$.
Let $(x)$ and $(c)$ be minimal reductions of $\m$ and $\C$ respectively. Then $(xc)$ is a minimal reduction of $\m \C$. 
Thus, we have
\[ \cdeg(\AA) = \lambda_{\AA}(\AA/xc\AA) - \lambda_{\AA}(\AA/\m \C) = e^{-1} (   \lambda_{\RR}(\AA/xc\AA) - \lambda_{\RR}(\AA/\m \C)  ). \]
Moreover, we have the following.
\[ \begin{array}{rcl}
{\ds \lambda_{\RR}(\AA/xc\AA)  } &=& {\ds \lambda_{\RR}(\AA/x\AA)+ \lambda_{\RR}(x\AA/xc\AA) = \lambda_{\RR}(\RR/ x\RR) + \lambda_{\RR}(\RR/c\RR), } \vspace{0.1 in} \\
{\ds  \lambda_{\RR}(\AA/\m \C) } &=& {\ds \lambda_{\RR}(\AA/\RR) + \lambda_{\RR}(\RR/\C) + \lambda_{\RR}(\C/\m \C) = 2r + \lambda_{\RR}(\RR/\C).  } 
\end{array} \]
Since $\rme_{0}(\m)= \lambda_{\RR}(\RR/x \RR)$ and $\cdeg(\RR)= \lambda_{\RR}( \RR/c \RR) - \lambda_{\RR}(\RR/ \C)$, we obtain 
\[ \cdeg(\AA) = e^{-1} ( \cdeg(\RR) + \rme_0(\m) - 2r ). \qedhere \]
\end{proof}

 \begin{Corollary} Let $\RR$ and $\AA$ be the same as in {\rm Theorem~\ref{TCdeg}}.  
Then  $\AA$ is a Gorenstein ring if and only if
 ${\ds \cdeg(\RR) + \rme_0(\m) - 2r=0}$. 
   In particular 
   $\m$ and $\C \AA$ are principal ideals of $\AA$. 
\end{Corollary}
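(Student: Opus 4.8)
The plan is to treat the two assertions separately: the equivalence is an immediate reading of Theorem~\ref{TCdeg} through the dimension-one Gorenstein criterion, while the ``in particular'' clause follows from one short multiplicative manipulation.

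First I would record that $\AA$ is itself a one-dimensional Cohen-Macaulay local ring admitting a canonical ideal. It is local by hypothesis; it is a module-finite torsion-free extension of $\RR$ inside $Q$, hence a maximal Cohen-Macaulay $\RR$-module of positive depth and so Cohen-Macaulay of dimension $1$; and by Theorem~\ref{NGens}-(3) the ideal $\m\C$ is a canonical ideal of $\AA$. Consequently the dimension-one characterization of Gorensteinness through the vanishing of $\cdeg$ (recalled in the Remark preceding Proposition~\ref{resisddeg1}) applies to $\AA$: it is Gorenstein precisely when $\cdeg(\AA)=0$. Theorem~\ref{TCdeg} gives $\cdeg(\AA)=e^{-1}(\cdeg(\RR)+\rme_0(\m)-2r)$, and since $e=[\AA/\M:\RR/\m]$ is a positive integer (the residue extension is finite, as $\AA$ is finite over $\RR$ and $\M\cap\RR=\m$), the factor $e^{-1}$ is harmless. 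Thus $\cdeg(\AA)=0$ if and only if $\cdeg(\RR)+\rme_0(\m)-2r=0$, which is the stated equivalence.

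For the ``in particular'' part I would assume $\AA$ Gorenstein and use that its canonical ideal is then free: $\m\C\simeq\AA$, so $\m\C=(w)$ for a regular element $w\in\AA$. The key observation is that $\m$ is an ideal of $\AA$ with $\m\AA=\m$ (since $\AA=\m:\m$), whence
\[ \m\cdot(\C\AA)=\m\C\AA=(\m\AA)\C=\m\C=(w). \]
Both factors are genuine integral ideals of $\AA$ — the first because $\m\subset\AA$, the second because $\C\subset\RR\subset\AA$ forces $\C\AA\subset\AA$ — and their product is the principal ideal generated by the nonzerodivisor $w$. In any commutative ring a factorization $IJ=(w)$ with $w$ regular makes each factor invertible: indeed $\m\cdot(w^{-1}\C\AA)=\AA$ and $(\C\AA)\cdot(w^{-1}\m)=\AA$ inside $Q$. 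An invertible ideal over the local ring $\AA$ is principal, so both $\m$ and $\C\AA$ are principal, as claimed.

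The verifications that $\AA$ is Cohen-Macaulay, that $e$ is finite and positive, and that $\m\AA=\m$ require a little care but present no real difficulty. The one load-bearing step is the identity $\m\cdot(\C\AA)=\m\C$ combined with the elementary principle that a product of ideals equal to a regular principal ideal distributes invertibility onto each factor; I expect no serious obstacle, since the equivalence is a direct consequence of Theorem~\ref{TCdeg} and the ``in particular'' clause reduces to this single manipulation.
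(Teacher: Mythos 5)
Your proposal is correct and follows essentially the same route as the paper: the equivalence is read off from Theorem~\ref{TCdeg} via the vanishing criterion $\cdeg(\AA)=0$, and the ``in particular'' clause comes from writing $\m\cdot(\C\AA)=\m\C=y\AA$ and concluding that each factor is an invertible, hence principal, ideal of the local ring $\AA$. Your version merely spells out the details (positivity of $e$, the identity $\m\AA=\m$, invertibility from $IJ=(w)$) that the paper leaves implicit.
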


\begin{proof} Suppose the canonical ideal of $\AA$ is  $\m \C = y \AA$. Then $\m (\C \AA) = y \AA$ as the product of  $\AA$-ideals. Thus, both $\m$ and $\C\AA$ are invertible ideals of the local ring $\AA$.
\end{proof} 

\begin{Remark}{\rm  Let $\RR$ and $\AA$ be the same as  in Theorem~\ref{NGens}.  Suppose that $\AA$ is semilocal with maximal ideals $\M_1, \ldots, \M_s$. 
We can still obtain a formula for $\cdeg(\AA)$ as a summation of the $\cdeg(\AA_{\M_i})$, that is
\[ \cdeg(\AA) =  \sum_{i=1}^{s} \cdeg (\AA_{\M_i}) = (\cdeg(\RR) + \rme_0(\m) - 2r) \sum_{i=1}^{s} e_i^{-1},\]
where ${\ds e_{i}= [\AA/ \M_{i} : \RR/\m]}$ for each $i$. 
}\end{Remark}

 \begin{Example}{\rm
Let $(\RR,\m)$ be a Stanley-Reisner ring of dimension $1$. If $\RR=k[x_1, \ldots, x_n]/L$, $L$ is generated by all the quadratic monomials
 $x_ix_j$, $i\neq j$. Note that $\m = (x_1)\oplus \cdots \oplus (x_n)$ and since for $i\neq j$ the annihilator
 of $\Hom((x_i), (x_j))$ has grade positive, by Remark~\ref{semilocal} we have
 \[ \AA =\Hom(\m,\m) = \Hom((x_1), (x_1))\times \cdots \times \Hom((x_n),(x_n))= k[x_1] \times \cdots \times k[x_n].\]
In order to compute $\cdeg(\RR)$, first we note that $\rme_0(\m)=n$. 
Since the Hilbert series of $\RR$ is  ${\ds {{1 + (n-1)t}\over {1-t}}}$, we have $r = n-1$. Therefore,
\[ \cdeg(\RR) = 2r -\rme_0(\m) = 2(n-1)-n = n-2=r-1. \]
Thus $\RR$ is almost Gorenstein (\cite[Theorem 5.2]{blue1}). This also follows from
 \cite[Theorem 5.1]{GMP11}.
}\end{Example}

Let us consider other invariants of $\AA = \m : \m$.

\begin{Remark}{\rm
It would be interesting  to calculate $\ddeg(\AA)$  under the same conditions as in  in Theorem~\ref{TCdeg}. That is, 
${\ds \ddeg(\AA)= \lambda_{\AA}(\AA/\D) - \lambda_{\AA}(\AA/\D^{**})}$, 
where $\D= \m \C$. Note that  ${\ds  \lambda_{\AA}(\AA/\D) = e^{-1} (2 r+ \lambda(\RR/ \C))}$.
}\end{Remark}

\begin{Example}{\rm Consider the monomial ring  $\RR = \mathbb{Q}[t^5, t^7, t^9]$. 
We have a presentation $\RR  \simeq \mathbb{Q}[X, Y, Z]/P $, with $P = (Y^2-XZ, X^5-YZ^2, Z^3-X^4 Y)$.  Let $x, y, z$ be the images of $X, Y, Z$ in $\RR$ respectively.  
Let us examine some properties of  $\AA = \m:\m$, where $\m=(x, y, z)$.
 
\begin{enumerate}[(1)]
\item As we discussed in Example~\ref{monoex}, $\RR$ has the canonical ideal $\C = (x, y)$ and $\cdeg(\RR)=2$. 
 Moreover we have $\rme_0(\m) = 5$,  $r=2$ and $e=1$. Therefore $\cdeg(\AA)= 2+5-4 = 3$.

\item  Let $\D$ be the canonical ideal of $\AA$. To calculate $\D^{**}$ we change $\C$ to $x \C$. We then get $\lambda(\AA/\D) = 11$ and $\lambda(\AA/\D^{**})= 9$. Therefore
 $\ddeg(\AA) = \lambda(\AA/\D)-\lambda(\AA/\D^{**}) = 2$.
\end{enumerate}
}\end{Example}

\section*{Acknowledgement}

We would like to thank the referee for his/her thoughtful and meticulous feedback and comments.

\end{document}